\documentclass[11pt,a4paper]{amsart}
\usepackage{amssymb,amsmath,epsfig,graphics,mathrsfs,enumerate,verbatim}
\usepackage[pagebackref,colorlinks=true,linkcolor=blue,citecolor=blue]{hyperref}
\usepackage{fancyhdr}
\usepackage{hyperref}
\hypersetup{
 colorlinks   = true,
 urlcolor     = blue,
 linkcolor    = blue,
 citecolor   = red ,
 bookmarksopen=true
}

\usepackage{amsmath}
\usepackage{amsfonts}
\usepackage{amssymb}
\usepackage{amsthm}
\usepackage{epsfig,graphics,mathrsfs}
\usepackage{graphicx}
\usepackage{dsfont}

\usepackage[usenames, dvipsnames]{color}

\usepackage{hyperref}

\makeatletter
\@namedef{subjclassname@2020}{%
  \textup{2020} Mathematics Subject Classification}
\makeatother

\def \phi {\varphi}

\def \R {\mathbb{R}}

\def \vf{\varphi}

\def \So {\mathscr{S}(\Rm)}

\newcommand{\Rm}{\mathbb R^d}

\newcommand{\p}{\partial}

\newcommand{\la}{\lambda}

\numberwithin{equation}{section}

\newcommand{\beq}{\begin{equation}}
\newcommand{\bea}[1]{\begin{array}{#1} }
\newcommand{\eeq}{ \end{equation}}
\newcommand{\ea}{ \end{array}}

\newcommand{\sa}{\langle}
\newcommand{\da}{\rangle}

\newtheorem{theorem}{Theorem}[section]
\newtheorem{lemma}[theorem]{Lemma}
\newtheorem{proposition}[theorem]{Proposition}
\newtheorem{corollary}[theorem]{Corollary}
\newtheorem{remark}[theorem]{Remark}

\numberwithin{equation}{section}

\begin{document}

\title[]{The Schr\"odinger Ornstein--Uhlenbeck flow:
dispersion, restriction and nonlinear dynamics}

\dedicatory{I dedicate this paper to Carlos Kenig, a deeply influential mathematician.}

\keywords{Ornstein--Uhlenbeck operator. Schr\"odinger equation.
Harmonic oscillator. Lens transform. Strichartz estimates.
Tomas--Stein restriction theorem. Dynamic restriction.
Mass-critical nonlinear Schr\"odinger equation}

\subjclass[2020]{Primary 35Q41; Secondary 42B10, 35Q55, 35B45, 35A30, 35P25}

\date{}
\begin{abstract}
We study the Schr\"odinger evolution generated by the isotropic
Ornstein--Uhlenbeck operator
\[
\mathscr L=\Delta-\langle x,\nabla\rangle
\]
in $L^2(\mathbb R^d,d\gamma)$, where $d\gamma$ is the invariant
Gaussian measure.  Combining the classical lens correspondence
between the free Schr\"odinger equation and the harmonic oscillator
with Gaussian conjugation, we identify the free Schr\"odinger
structure hidden in the Ornstein--Uhlenbeck evolution.

The main consequence is a dynamic restriction theorem of spacetime
type.  After an explicit transformation of spacetime variables, the
adjoint Ornstein--Uhlenbeck extension operator becomes an ordinary
spacetime Fourier transform restricted to the classical
Schr\"odinger paraboloid.  On the Ornstein--Uhlenbeck side this
corresponds to an explicitly time-dependent Gaussian spacetime
measure.  Thus a restriction geometry which is neither suggested by
the symbol of $\mathscr L$ nor selected by a scaling symmetry of the
Ornstein--Uhlenbeck equation emerges dynamically from the free
Schr\"odinger representation.

The same correspondence identifies the weighted Gaussian spaces in
the homogeneous and inhomogeneous Ornstein--Uhlenbeck Strichartz
estimates and transports the mass-critical nonlinear Schr\"odinger
equation to
\[
i\partial_tu+\mathscr Lu
=
\mu e^{-|x|^2/d}|u|^{4/d}u.
\]
We also obtain a global description of the unitary
Ornstein--Uhlenbeck group, including its oscillatory representation
away from the caustics and its exact values at the caustic times,
together with the associated sharp weighted dispersive estimate and
Hardy-type uncertainty principles.
\end{abstract}

\author{Nicola Garofalo}

\address{School of Mathematical and Statistical Sciences\\ Arizona State University\\ Tempe, AZ 85287-1804}
\vskip 0.2in
\email{Nicola.Garofalo@asu.edu}


\maketitle

\tableofcontents

\section{Introduction}\label{S:intro}

In this work we study the Schr\"odinger evolution generated by the
isotropic Ornstein--Uhlenbeck operator
\[
\mathscr L:=\Delta-\langle x,\nabla\rangle
\]
in
\[
L^2(\R^d,d\gamma),
\qquad
d\gamma(x)=(2\pi)^{-d/2}e^{-|x|^2/2}\,dx.
\]
Our starting point is the Cauchy problem
\begin{equation}\label{cp0}
\begin{cases}
i\partial_tu+\mathscr Lu=0,\\
u(\cdot,0)=\vf,\qquad \vf\in L^2(\R^d,d\gamma).
\end{cases}
\end{equation}
The natural self-adjoint realization of $\mathscr L$ generates a
strongly continuous unitary group
$\{e^{it\mathscr L}\}_{t\in\R}$ on $L^2(\R^d,d\gamma)$.

At the level of its differential expression, \eqref{cp0} looks quite
different from the free Schr\"odinger equation.  The drift destroys
translation invariance, Gaussian measure replaces Lebesgue measure,
and there is no Schr\"odinger scaling.  The central observation of
this paper is that, despite these differences, the Ornstein--Uhlenbeck
evolution contains an exact copy of the dispersive structure of the
free Schr\"odinger equation.  More precisely, two changes of
representation lead to the chain
\medskip
\[
\boxed{
\text{free Schr\"odinger}
\quad\longleftrightarrow\quad
\text{harmonic oscillator}
\quad\longleftrightarrow\quad
\text{Schr\"odinger Ornstein--Uhlenbeck}.
}
\]
\medskip
The first correspondence is the classical lens, or Niederer,
transformation; the second is Gaussian conjugation.  Neither
correspondence separately is new.  Our purpose is instead to follow
some of the principal structures of free Schr\"odinger analysis
through their composition and determine what they become in Gaussian
Ornstein--Uhlenbeck variables.

This point of view produces a coherent picture.  The weighted
Gaussian spaces occurring in Ornstein--Uhlenbeck Strichartz
estimates are forced by the change of representation.  More
unexpectedly, the classical Schr\"odinger paraboloid survives the
transformation and gives a dynamic restriction theorem in Gaussian
spacetime.  At the nonlinear level, the same correspondence selects
a canonical Gaussian mass-critical nonlinearity.  Finally, the
global Ornstein--Uhlenbeck group exhibits a periodic dispersive
geometry with caustics, sharp weighted dispersion, and Hardy-type
uncertainty principles.  These phenomena are different
manifestations of the same underlying free Schr\"odinger structure.

We now describe the correspondence more precisely.  The relation
between the free Schr\"odinger equation and the isotropic harmonic
oscillator goes back to Niederer \cite{Ni1,Ni2}.  In the
normalization used here, if
\[
i\partial_s\psi+\Delta\psi=0,
\]
then, for $|t|<\pi/2$,
\begin{equation}\label{lens-intro}
v(x,t)
:=
(\cos t)^{-d/2}
\exp\left(-i\frac{|x|^2}{4}\tan t\right)
\psi\left(\frac{x}{\cos t},\tan t\right)
\end{equation}
solves
\[
i\partial_tv+
\left(\Delta-\frac{|x|^2}{4}\right)v=0.
\]
Thus $s=\tan t$ compactifies the free time axis into a finite
interval.

The free--harmonic correspondence has a substantial history in
dispersive PDE. We mention here only a few significant sources. Carles used it to relate the mass-critical
nonlinear Schr\"odinger equation with and without an isotropic
harmonic potential \cite{Carles}; Tao subsequently emphasized the lens
transformation as a pseudoconformal compactification of spacetime
and used it to relate scattering, global spacetime bounds, and
well-posedness for the two evolutions \cite{TaoLens}.  Its role in
the variational theory of the mass-critical Strichartz inequality
was exploited by Duyckaerts, Merle and Roudenko \cite{DMR}; see also
\cite{BGTV} for an application of the lens transform to scattering
in $\Sigma$ for mass-subcritical nonlinear Schr\"odinger equations.

The second correspondence is particularly simple.  Set
\[
\omega(x)=e^{-|x|^2/4}.
\]
Then
\[
\omega\mathscr L\omega^{-1}
=
\Delta-\frac{|x|^2}{4}+\frac d2,
\]
and consequently
\[
v(x,t)
=
e^{-idt/2}\omega(x)
\big(e^{it\mathscr L}\vf\big)(x)
\]
solves the harmonic-oscillator equation.  It is the composition of
this Gaussian conjugation with \eqref{lens-intro} that exposes the
free Schr\"odinger structure of the Ornstein--Uhlenbeck flow.

A first manifestation is the Strichartz theory.  Recall that a pair
$(q,r)$, with $2\le q,r\le\infty$, is Schr\"odinger-admissible if
\[
\frac2q+\frac dr=\frac d2,
\]
The endpoint $(q,r,d)=(2,\infty,2)$ is excluded, since the corresponding estimate fails in dimension $2$ \cite{MS98}.
The mixed-norm estimates for the free Schr\"odinger equation are due
to Ginibre and Velo \cite{GV}, while the endpoint theory in dimensions $d\geq3$ was completed by Keel and Tao \cite{KT}.  Transport through \eqref{lens-intro} and
Gaussian conjugation produces the spatial spaces
\[
L^r_\gamma(\omega^{r-2}),
\qquad
\|f\|_{L^r_\gamma(\omega^{r-2})}
=
\left(
\int_{\R^d}|f(x)|^r\omega(x)^{r-2}\,d\gamma(x)
\right)^{1/r},
\]
for $2\le r<\infty$, while at the endpoint we set
\[
\|f\|_{L^\infty_{\gamma,\omega}}:=\|\omega f\|_{L^\infty(dx)}.
\]
With this convention the one-dimensional admissible endpoint $(q,r)=(4,\infty)$ is included; in the displayed estimates below, $L^r_\gamma(\omega^{r-2})$ is understood as $L^\infty_{\gamma,\omega}$ when $r=\infty$.  For finite $r$, the dual spaces are
 $L^{r'}_\gamma(\omega^{r'-2})$.  Thus both the
Gaussian weight and its exponent are determined by the
representation.  The resulting homogeneous and inhomogeneous
estimates have the form
\begin{equation}\label{introStrHom}
\|e^{it\mathscr L}\vf\|_
{L^q_tL^r_\gamma(\omega^{r-2})}
\leq C_{d,q,r}\|\vf\|_{L^2(d\gamma)}
\end{equation}
and
\begin{equation}\label{introStrInhom}
\left\|
\int_0^t e^{i(t-s)\mathscr L}F(s)\,ds
\right\|_
{L^q_tL^r_\gamma(\omega^{r-2})}
\leq
C
\|F\|_
{L^{\widetilde q'}_t
 L^{\widetilde r'}_\gamma
 (\omega^{\widetilde r'-2})},
\end{equation}
for admissible pairs in the usual range.  In this way the
correspondence explains, rather than merely reproduces, the weighted
spaces naturally associated with the Ornstein--Uhlenbeck flow.

If the correspondence did no more than transport norm inequalities,
one might regard it primarily as a convenient change of variables.
The restriction theorem shows that more is encoded in it: the
underlying Fourier geometry is transported as well.  Let
\[
\mathcal P
=
\big\{(\xi,\tau)\in\R^d\times\R:
\tau=-2\pi|\xi|^2\big\}
\]
be the classical Schr\"odinger paraboloid, and consider the free
extension operator
\[
(Eg)(y,s)
=
\int_{\R^d}
e^{2\pi i\langle y,\xi\rangle-4\pi^2is|\xi|^2}
g(\xi)\,d\xi.
\]
Transporting $E$ through the full correspondence defines an
Ornstein--Uhlenbeck extension operator $E_{\rm OU}$.  At the
restriction exponent
\[
p_{\rm res}=\frac{2(d+1)}{d+3},
\qquad
p_{\rm res}'=\frac{2(d+1)}{d-1},
\]
the corresponding spacetime measure is
\begin{equation}\label{OUrestrictionmeasure-intro}
d\nu(x,t)
=
(\cos t)^{\frac{2}{d-1}}
\omega(x)^{\frac{4}{d-1}}\,
d\gamma(x)\,dt,
\qquad |t|<\frac{\pi}{2}.
\end{equation}
Again, the powers in this measure are fixed by the transformation
law.

Taking the adjoint reveals the hidden geometry.  With
\[
s=\tan t,\qquad y=\frac{x}{\cos t},
\]
one obtains
\begin{equation}\label{OUadjoint-intro}
(E_{\rm OU}^*F)(\xi)
=
(2\pi)^{-d/2}
\widehat{\mathcal CF}
\big(\xi,-2\pi|\xi|^2\big),
\end{equation}
where
\begin{equation}\label{OUrestrictiontransform-intro}
\begin{aligned}
(\mathcal CF)(y,s)
={}&
(1+s^2)^{-\frac d4-1-\frac1{d-1}}
e^{-\frac{id}{2}\arctan s}
e^{\frac{is|y|^2}{4(1+s^2)}}\\
&\times
e^{-\frac{d+3}{4(d-1)}
       \frac{|y|^2}{1+s^2}}
F\left(
\frac{y}{\sqrt{1+s^2}},
\arctan s
\right).
\end{aligned}
\end{equation}
Thus an ordinary spacetime Fourier transform, restricted to the
classical paraboloid, is concealed inside the Ornstein--Uhlenbeck
evolution.
For the restriction results we assume $d\ge2$.

\medskip
\noindent
\textbf{Theorem A (Dynamic restriction for the isotropic
Ornstein--Uhlenbeck flow).}
\emph{For every
$F\in C^\infty_0(\R^d\times(-\pi/2,\pi/2))$,
\[
\left\|
(2\pi)^{-d/2}
\widehat{\mathcal CF}
\big(\xi,-2\pi|\xi|^2\big)
\right\|_{L^2(\R^d)}
\leq
C_d
\|F\|_{L^{p_{\rm res}}(d\nu)}.
\]
Equivalently,
\[
\|E_{\rm OU}^*F\|_{L^2(\R^d)}
\leq
C_d\|F\|_{L^{p_{\rm res}}(d\nu)}.
\]
}

\medskip

The point of Theorem A is not the complexity of its proof but the
geometry it exposes.  Neither the differential expression of
$\mathscr L$ nor a scaling symmetry of \eqref{cp0} predicts
$\mathcal P$; in fact, \eqref{cp0} has no scaling symmetry.  The
paraboloid is inherited dynamically from the free equation, while
$d\nu$ records the corresponding deformation of spacetime measure.
In this sense the full correspondence transports not only
Strichartz norms but the Fourier geometry from which the free
Schr\"odinger theory originates.

There are other notions of dynamical restriction for Schr\"odinger
propagators.  In particular, Nicola \cite{NicolaRestriction}
recently established a general principle concerning restriction at
fixed time to curved spatial submanifolds.  The phenomenon in
Theorem A is different: time participates in the spacetime Fourier
transform, and the restriction manifold is the Schr\"odinger
paraboloid in spacetime frequency.

The nonlinear theory provides a second structural test of the
correspondence.  Consider the standard mass-critical equation
\begin{equation}\label{freecriticalNLS-intro}
i\partial_s\psi+\Delta\psi
=
\mu|\psi|^{4/d}\psi.
\end{equation}
Its transport from the free equation to the harmonic oscillator is
classical \cite{Carles,TaoLens}; at the exponent $1+4/d$ the
time-dependent factor generated by the lens transformation cancels.
Gaussian conjugation then gives
\begin{equation}\label{OUcriticalNLS-intro}
i\partial_tu+\mathscr Lu
=
\mu\omega^{4/d}|u|^{4/d}u
=
\mu e^{-|x|^2/d}|u|^{4/d}u.
\end{equation}
Thus the Gaussian factor in \eqref{OUcriticalNLS-intro} is not
introduced in order to make a fixed-point argument work: it is
forced by the exact image of the standard mass-critical NLS in
Ornstein--Uhlenbeck variables.

The same Gaussian power also appears from the sharp diagonal
Strichartz functional.  The lens transform has previously been used
in the variational analysis of the mass-critical Strichartz
inequality \cite{DMR}; after the full correspondence, the
Euler--Lagrange equation for the Gaussian Ornstein--Uhlenbeck
functional contains precisely
\[
\omega^{4/d}|u|^{4/d}u.
\]
The transport of the classical mass-critical equation and the
Gaussian variational structure therefore select the same
nonlinearity.

The corresponding well-posedness theory is obtained by transporting
the classical mass-critical theory.  Scattering at infinite free
time becomes continuation through the endpoints of a Niederer chart,
and Gaussian conjugation transfers this continuation to the
Ornstein--Uhlenbeck variables.  This is the nonlinear counterpart of
the same change of representation that produced
\eqref{introStrHom}--\eqref{introStrInhom} and Theorem A.

It is useful here to compare with the recent work of Dasgupta, Dolai,
Luo and Song \cite{DDLS}.  After the elementary normalization
described in Section \ref{S:strichartz}, their homogeneous and
inhomogeneous linear weighted Strichartz spaces coincide with
\eqref{introStrHom}--\eqref{introStrInhom}.  The nonlinear equations
are different.  At the mass-critical exponent, in the normalization
of the present paper, the nonlinearity considered in \cite{DDLS} is
\[
\mu\omega^{1+4/d}|u|^{4/d}u,
\]
whereas the full free--harmonic--Ornstein--Uhlenbeck correspondence
gives
\[
\mu\omega^{4/d}|u|^{4/d}u.
\]
The latter is the exact image of the standard mass-critical NLS.

The correspondence also fits naturally with the intrinsic global
analysis of the Ornstein--Uhlenbeck group.  Away from the caustic
times $t=k\pi$ the propagator admits an explicit oscillatory
representation, while at the caustics the unitary evolution itself
remains regular.  In particular,
\[
e^{i\pi\mathscr L}=\mathcal R,
\qquad
e^{i2\pi\mathscr L}=I,
\qquad
(\mathcal R\vf)(x)=\vf(-x).
\]
Thus the imaginary Ornstein--Uhlenbeck evolution is periodic, with
reflection at half-period.  At the quarter periods it becomes,
after Gaussian conjugation, a Fourier transform.  The fundamental
solution on the first period, the sharp weighted dispersive estimate,
and the associated Hardy-type uncertainty principles first appeared
in our earlier preprint \cite{GarOUpre}; the global formulation used
here identifies the evolution at the caustics and makes the periodic
structure explicit.  Related dynamical Hardy uncertainty principles
for Schr\"odinger equations with real drift have recently been
obtained by Garofalo and Lunardi \cite{GarLun} in the broader
H\"ormander setting.

Taken together, these results show that the free Schr\"odinger
correspondence is more than a device for transferring individual
estimates.  In Gaussian Ornstein--Uhlenbeck variables it determines
the natural Strichartz weights, reveals a hidden paraboloid,
identifies the mass-critical nonlinear equation, and organizes the
global dispersive and uncertainty structure of the flow.

The paper is organized as follows.  Section \ref{S:senzadrift}
develops the intrinsic linear Schr\"odinger Ornstein--Uhlenbeck flow,
including its global representation, caustics, and sharp dispersive
estimate.  Section \ref{S:app} establishes the Gaussian
correspondence with the harmonic oscillator and records the Niederer
transformation in the normalization used here.  Section
\ref{S:strichartz} derives the weighted homogeneous and
inhomogeneous Strichartz estimates.  The dynamic restriction theorem
and the emergence of the hidden paraboloid are developed in Section
\ref{S:restriction}.  Section \ref{S:nonlinear} discusses the
critical variational structure and the mass-critical
Ornstein--Uhlenbeck dynamics, including the well-posedness theory
obtained by transport.  Section \ref{S:uncertainty} contains the
Hardy-type uncertainty principles.

\medskip

\noindent\textbf{Acknowledgment.}
I am grateful to Federico Buseghin for carefully reading an earlier
version of the manuscript, for pointing out an oversight in the phase
of the lens transformation, and for several valuable comments and
references which helped improve the presentation of the paper.

\section{The Schr\"odinger Ornstein--Uhlenbeck flow}\label{S:senzadrift}

We begin by recording the functional setting and the precise statements that will be proved in this section.  The operator $\mathscr L$ is self-adjoint in $L^2(\Rm,d\gamma)$, with its natural self-adjoint realization.  Consequently, by Stone's theorem, see \cite[Theorem 1, p.~345]{Yo}, it generates a strongly continuous one-parameter group of unitary operators
\begin{equation}\label{OUunitarygroup}
\{e^{it\mathscr L}\}_{t\in\mathbb R}
\quad\text{in }L^2(\Rm,d\gamma),
\end{equation}
and
\begin{equation}\label{OUunitarity}
\|e^{it\mathscr L}\vf\|_{L^2(d\gamma)}
=\|\vf\|_{L^2(d\gamma)},\qquad t\in\mathbb R.
\end{equation}
For the explicit analysis of the group, given $\vf$ we set
\begin{equation}\label{psi}
\psi(x)=e^{-|x|^2/4}\vf(x).
\end{equation}
Then
\[
\psi\in L^2(\Rm)\quad\Longleftrightarrow\quad \vf\in L^2(\Rm,d\gamma),
\]
and
\begin{equation}\label{phipsi}
\|\psi\|_{L^2(\Rm)}=(2\pi)^{d/4}\|\vf\|_{L^2(\Rm,d\gamma)}.
\end{equation}
We use the dense class
\begin{equation}\label{K}
\mathscr K(\Rm)=\{\vf\in C^\infty(\Rm):\psi\in\mathscr S(\Rm)\}.
\end{equation}
The explicit formulas are first established on $\mathscr K(\Rm)$; density and the unitarity of \eqref{OUunitarygroup} then determine the $L^2(d\gamma)$ evolution uniquely.

Our global representation is the following.  In the next statement $\lfloor s\rfloor$ denotes the greatest integer not exceeding $s$.

\begin{proposition}\label{P:OUim}
Let $\vf\in\mathscr K(\Rm)$ and, for $t\in\R\setminus\pi\mathbb Z$, set
\[
k(t)=\left\lfloor\frac{t}{\pi}\right\rfloor.
\]
Then, for every $x\in\Rm$ and $t\in\R\setminus\pi\mathbb Z$,
\begin{align}\label{OUim}
\left(e^{it\mathscr L}\vf\right)(x)
={}&\frac{(4\pi)^{-\frac d2}e^{\frac{idt}{2}}
 e^{-\frac{i\pi d}{4}(2k(t)+1)}}{|\sin t|^{\frac d2}} \notag\\
&\times\int_{\Rm}
\exp\!\left\{
\frac{i\,\operatorname{sgn}(\sin t)}{4|\sin t|}
\left(e^{it}|y|^2+e^{-it}|x|^2-2\langle x,y\rangle\right)
\right\}\vf(y)\,dy.
\end{align}
At the caustic times,
\begin{equation}\label{OUim-caustic}
\left(e^{ik\pi\mathscr L}\vf\right)(x)=\vf((-1)^k x),\qquad k\in\mathbb Z.
\end{equation}
These formulas give the solution of \eqref{cp0} in $\Rm\times\R$.
\end{proposition}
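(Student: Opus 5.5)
The plan is to reduce everything to the first half-period $0<t<\pi$ and then propagate to all $t$ by the group law, using an exact identification of $e^{i\pi\mathscr L}$. I would start with the spectral picture, since it gives the caustic formula \eqref{OUim-caustic} most cleanly. The Hermite polynomials $H_\alpha$ form an orthogonal basis of $L^2(\Rm,d\gamma)$ and satisfy $\mathscr L H_\alpha=-|\alpha|H_\alpha$. Hence
\[
e^{it\mathscr L}H_\alpha=e^{-it|\alpha|}H_\alpha .
\]
At $t=\pi$ this gives $e^{i\pi\mathscr L}H_\alpha=(-1)^{|\alpha|}H_\alpha=H_\alpha(-\,\cdot)$. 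By density and unitarity \eqref{OUunitarity} it follows that $e^{i\pi\mathscr L}=\mathcal R$, and then $e^{ik\pi\mathscr L}=\mathcal R^k$, which is \eqref{OUim-caustic}.

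For $0<t<\pi$ I would pass to the harmonic oscillator $H=\Delta-\frac{|x|^2}{4}$ via $\omega\mathscr L\omega^{-1}=H+\frac d2$, which gives
\[
e^{it\mathscr L}=e^{idt/2}\,\omega^{-1}e^{itH}\omega .
\]
I would then use the Mehler kernel of $e^{itH}$ for $0<t<\pi$:
\[
(4\pi i\sin t)^{-d/2}\exp\Big\{\tfrac{i}{4\sin t}\big((|x|^2+|y|^2)\cos t-2\langle x,y\rangle\big)\Big\},
\]
with $i^{-d/2}=e^{-i\pi d/4}$. Conjugating by $\omega$ multiplies the integrand by $e^{|x|^2/4-|y|^2/4}$. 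The exponents then combine as follows:
\[
\frac{i\cos t}{4\sin t}|y|^2-\frac{|y|^2}{4}=\frac{i e^{it}}{4\sin t}|y|^2,\qquad
\frac{i\cos t}{4\sin t}|x|^2+\frac{|x|^2}{4}=\frac{ie^{-it}}{4\sin t}|x|^2 .
\]
Since $\omega(y)\vf(y)=\psi(y)$, this yields \eqref{OUim} with $k=0$.

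To justify the Mehler kernel with the correct branch of the phase, I would take either of two routes; the first is my preferred one.
\begin{enumerate}
\item Continue analytically Mehler's formula for the Ornstein--Uhlenbeck heat semigroup $e^{z\mathscr L}$, $\operatorname{Re}z>0$, to $z=it+\varepsilon$, and let $\varepsilon\downarrow0$ on $\psi\in\mathscr S$. Dominated convergence applies, because the Gaussian factor $e^{-|y|^2/4}$ is already absorbed into $\psi$, and the branch of the square root is fixed by continuity from real $z$.
\item Alternatively, check directly that the kernel solves the equation, that it tends to $\vf$ as $t\downarrow0$ by a stationary-phase or Fourier computation, and conclude by uniqueness of the unitary group on the dense class $\mathscr K$.
\end{enumerate}

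For general $t\in(k\pi,(k+1)\pi)$ I write $e^{it\mathscr L}=\mathcal R^k e^{i(t-k\pi)\mathscr L}$ and substitute the $k=0$ formula with $t-k\pi$ in place of $t$ and $(-1)^kx$ in place of $x$. The substitutions are:
\begin{itemize}
\item $\sin(t-k\pi)=(-1)^k\sin t=|\sin t|$, because $\operatorname{sgn}(\sin t)=(-1)^k$;
\item $e^{i(t-k\pi)}=(-1)^ke^{it}$ and $e^{-i(t-k\pi)}=(-1)^ke^{-it}$;
\item the cross term becomes $-2(-1)^k\langle x,y\rangle$.
\end{itemize}
Pulling the common factor $(-1)^k=\operatorname{sgn}(\sin t)$ out of the bracket gives exactly the exponent in \eqref{OUim}. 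The prefactor becomes
\[
e^{id(t-k\pi)/2}e^{-i\pi d/4}=e^{idt/2}e^{-\frac{i\pi d}{4}(2k+1)},
\]
with $k=k(t)$, which matches \eqref{OUim}. This covers all $t\in\R\setminus\pi\mathbb Z$, including negative times.

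The main obstacle I expect is the rigorous boundary passage that fixes the constant $e^{-i\pi d/4}$ correctly on the first interval, i.e.\ the Maslov phase. Everything after that is bookkeeping forced by the exact identity $e^{i\pi\mathscr L}=\mathcal R$, which the Hermite expansion settles independently.
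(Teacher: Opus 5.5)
Your proof is correct. It differs from the paper's only in how the formula on the first half-period $(0,\pi)$ is obtained. The Hermite parity identity $e^{i\pi\mathscr L}=\mathcal R$ and the group-law bookkeeping that propagate it to all $t\notin\pi\mathbb Z$ are the same as in the paper, and your substitutions $\sin(t-k\pi)=|\sin t|$, $e^{\pm i(t-k\pi)}=(-1)^ke^{\pm it}$ and the phase $e^{idt/2}e^{-i\pi d(2k+1)/4}$ all check out.

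\textbf{The paper's route on $(0,\pi)$.} It removes the drift by the complex dilation $v(x,t)=f(e^{it}x,t)$, which turns \eqref{cp0} into $\partial_tv=ie^{-2it}\Delta v$. It solves this by the Fourier transform and evaluates the resulting complex Gaussian with H\"ormander's formula for $A=ie^{-it}\sin t\,I_d$, $\Re A=\sin^2t\,I_d\ge0$. That formula fixes the factor $e^{idt/2}e^{-i\pi d/4}(\sin t)^{-d/2}$, and then the dilation is undone. This is short and needs no external kernel. However, evaluating $f$ and $v$ at the complex points $e^{\pm it}x$ is a change of variables that needs a holomorphic-extension justification the paper does not supply.

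\textbf{What your route buys.} You start from the real Mehler kernel and continue it in the time variable inside the holomorphic semigroup $e^{z\mathscr L}$, $\Re z>0$. This is a rigorous version of the Wick rotation $t\to it$ in \eqref{gustavo}, which the paper mentions only heuristically, and it fixes the branch by continuity.

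\textbf{You can skip the harmonic-oscillator detour.} The OU heat kernel $(2\pi(1-e^{-2z}))^{-d/2}\exp\{-|y-e^{-z}x|^2/(2(1-e^{-2z}))\}$ at $z=it$ gives \eqref{OUim} with $k(t)=0$ directly. Indeed, $1-e^{-2it}=2\sin t\,e^{i(\pi/2-t)}$ has argument in $(-\pi/2,\pi/2)$, so the principal branch yields exactly $(4\pi\sin t)^{-d/2}e^{idt/2}e^{-i\pi d/4}$.

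\textbf{The dominated-convergence step needs one more line.} At $z=\varepsilon+it$ the real part of the coefficient of $-|y|^2$ is $\tfrac14+\tfrac{1-e^{-4\varepsilon}}{4|1-e^{-2z}|^2}$, which exceeds $1/4$ by about $\varepsilon/(4\sin^2t)$. Meanwhile the cross term $\langle x,y\rangle/(2\sinh z)$ acquires a real part of size $O(\varepsilon)|x||y|$. The excess absorbs the cross term, which gives the dominant $C(x)|\psi(y)|$ uniformly in $\varepsilon\in(0,1]$, not just the $e^{-|y|^2/4}$ absorbed into $\psi$. The pointwise limit is then identified with $e^{it\mathscr L}\vf$ through strong continuity in $L^2(d\gamma)$.
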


The singularities of \eqref{OUim} are caustics of the oscillatory representation, not singularities of the unitary evolution.  At the quarter periods the group is instead a Gaussian-conjugated Fourier transform.  Let $(\mathcal R\vf)(x)=\vf(-x)$.

\begin{corollary}[Special times]\label{C:quarter}
For every $\vf\in\mathscr K(\Rm)$ and every $k\in\mathbb Z$,
\begin{equation}\label{causticOU}
e^{ik\pi\mathscr L}\vf(x)=\vf((-1)^kx)=\mathcal R^k\vf(x),
\end{equation}
and hence
\begin{equation}\label{OUperiodicity}
e^{i\pi\mathscr L}=\mathcal R,\qquad e^{i2\pi\mathscr L}=I.
\end{equation}
Moreover,
\begin{equation}\label{quarterOU}
e^{-|x|^2/4}\left(e^{i(\frac\pi2+k\pi)\mathscr L}\vf\right)(x)
=(4\pi)^{-d/2}\,
\mathscr F\!\left(e^{-|\cdot|^2/4}\vf\right)
\left(\frac{(-1)^kx}{4\pi}\right).
\end{equation}
By density, \eqref{causticOU}--\eqref{quarterOU} extend as identities in $L^2(d\gamma)$.
\end{corollary}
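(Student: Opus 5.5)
The caustic identities \eqref{causticOU}--\eqref{OUperiodicity} need no new argument. They are exactly \eqref{OUim-caustic} of Proposition \ref{P:OUim}. Since $\mathcal R^2=I$, we get $e^{i\pi\mathscr L}=\mathcal R$ and $e^{i2\pi\mathscr L}=\mathcal R^2=I$, first on $\mathscr K(\Rm)$ and then on all of $L^2(d\gamma)$. The extension uses density of $\mathscr K(\Rm)$ together with the fact that both sides are bounded operators: $e^{it\mathscr L}$ is unitary by \eqref{OUunitarity}, and $\mathcal R$ preserves $d\gamma$ because the Gaussian is even.

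For \eqref{quarterOU} I would evaluate \eqref{OUim} at $t=\frac\pi2+k\pi$, which is not in $\pi\mathbb Z$.

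\emph{Parameters.} We have $\sin t=(-1)^k$, so $|\sin t|=1$ and $\operatorname{sgn}(\sin t)=(-1)^k$. We also have $k(t)=k$ and $e^{\pm it}=\pm i(-1)^k$.

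\emph{Phase in the exponent.} The exponent becomes
\[
\frac{i(-1)^k}{4}\Big(i(-1)^k|y|^2-i(-1)^k|x|^2-2\langle x,y\rangle\Big)
=-\frac{|y|^2}{4}+\frac{|x|^2}{4}-\frac{i(-1)^k}{2}\langle x,y\rangle .
\]
The Gaussian factor $e^{-|y|^2/4}$ combines with $\vf(y)$ to give $\psi(y)=e^{-|y|^2/4}\vf(y)$. The factor $e^{|x|^2/4}$ is cancelled by the prefactor $e^{-|x|^2/4}$ on the left-hand side of \eqref{quarterOU}.

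\emph{Fourier form.} What remains is $\int\psi(y)e^{-i(-1)^k\langle x,y\rangle/2}\,dy$. With the normalization $\mathscr F f(\xi)=\int f(y)e^{-2\pi i\langle \xi,y\rangle}dy$ used for $E$ in the introduction, this equals $\mathscr F\psi\big((-1)^kx/(4\pi)\big)$.

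\emph{Constant prefactor.} Here $e^{idt/2}=e^{i\pi d/4}e^{i\pi dk/2}$ and $e^{-\frac{i\pi d}{4}(2k+1)}=e^{-i\pi d/4}e^{-i\pi dk/2}$. Their product is $1$, leaving exactly $(4\pi)^{-d/2}$.

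The final $L^2(d\gamma)$ extension of \eqref{quarterOU} again follows by density. By \eqref{phipsi}, the map $\vf\mapsto\psi$ is a multiple of a unitary map $L^2(d\gamma)\to L^2(dx)$. On the right-hand side, $\mathscr F$ is unitary on $L^2$, and composing with the dilation $x\mapsto (-1)^kx/(4\pi)$ is bounded. The left-hand side is bounded since $e^{it\mathscr L}$ is unitary. So both sides of \eqref{quarterOU} are bounded operators from $L^2(d\gamma)$ to $L^2(dx)$ that agree on $\mathscr K(\Rm)$, and therefore agree everywhere.

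The only delicate step is the bookkeeping of the Maslov phase and the sign conventions. I would double-check that the $k$-dependent phases cancel, rather than leaving a residual factor $i^{\pm d}$, including when $k$ is negative, where $\lfloor t/\pi\rfloor=k$ still holds. I would also confirm that the sign $(-1)^k$ in the Fourier argument matches the convention for $\mathscr F$.
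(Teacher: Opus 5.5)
Your proposal is correct and follows the route the paper intends. The caustic identities are \eqref{OUim-caustic} (parity from the Hermite expansion) together with $\mathcal R^2=I$, and \eqref{quarterOU} is \eqref{OUim} specialized to $t=\frac\pi2+k\pi$; your exponent, the Fourier argument $(-1)^kx/(4\pi)$, the cancellation $e^{idt/2}e^{-\frac{i\pi d}{4}(2k+1)}=1$ for every $k\in\mathbb Z$, and the density extension (both sides bounded from $L^2(d\gamma)$ to $L^2(dx)$, with $\mathscr K(\Rm)$ dense) are all right.
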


The points $t=k\pi$ are genuine caustics of the oscillatory kernel, whereas $t=\pi/2+k\pi$ are the boundaries of the individual Niederer charts and correspond to infinite free Schr\"odinger time.  This distinction reappears in the global nonlinear theory.

\begin{remark}\label{R:stable}
The global representation in Proposition \ref{P:OUim}, together with \eqref{causticOU}, shows that
\begin{equation}\label{nice}
e^{it\mathscr L}:\mathscr K(\Rm)\longrightarrow\mathscr K(\Rm),
\qquad t\in\mathbb R.
\end{equation}
Thus $\mathscr K(\Rm)$ is invariant under the full Schr\"odinger Ornstein--Uhlenbeck group.
\end{remark}

We use the Fourier-transform normalization \eqref{ft}, for which Plancherel's theorem is unitary, and set
\[
J^+=(0,\pi),\qquad J^-=(\pi,2\pi),\qquad J=J^+\cup J^-.
\]

\begin{proposition}\label{P:semigroup}
Let $\vf\in\mathscr K(\Rm)$. Then for every $t\in J^+$,
\begin{equation}\label{final}
e^{-|x|^2/4}\left(e^{it\mathscr L}\vf\right)(x)
=(4\pi)^{-d/2}\frac{e^{idt/2}}{e^{i\pi d/4}(\sin t)^{d/2}}
 e^{i\cot t\,|x|^2/4}
\mathscr F\!\left(e^{i\cot t\,|\cdot|^2/4}\psi\right)
\left(\frac{x}{4\pi\sin t}\right),
\end{equation}
where $\psi$ is defined in \eqref{psi}.
\end{proposition}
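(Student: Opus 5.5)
The formula \eqref{final} follows from the global representation \eqref{OUim} in Proposition \ref{P:OUim}, specialized to $t\in J^+=(0,\pi)$, by an algebraic rearrangement of the phase. For such $t$ we have $k(t)=0$, $\sin t>0$ and $\operatorname{sgn}(\sin t)=1$. The prefactor in \eqref{OUim} is therefore exactly
\[
(4\pi)^{-d/2}\,e^{idt/2}\,e^{-i\pi d/4}\,(\sin t)^{-d/2},
\]
which is the constant in \eqref{final}. It remains to treat the exponent
\[
\frac{i}{4\sin t}\bigl(e^{it}|y|^2+e^{-it}|x|^2-2\langle x,y\rangle\bigr).
\]
We split it into real and imaginary parts using $e^{\pm it}=\cos t\pm i\sin t$. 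This gives
\[
\frac{i e^{it}|y|^2}{4\sin t}=\frac{i\cot t}{4}|y|^2-\frac{|y|^2}{4},
\qquad
\frac{i e^{-it}|x|^2}{4\sin t}=\frac{i\cot t}{4}|x|^2+\frac{|x|^2}{4}.
\]
The real term $-|y|^2/4$ combines with $\vf(y)$ to produce $\psi(y)$, as defined in \eqref{psi}. The real term $+|x|^2/4$ is cancelled by multiplying both sides by $e^{-|x|^2/4}$, which is why that factor appears on the left of \eqref{final}. The factor $e^{i\cot t\,|x|^2/4}$ does not depend on $y$, so it comes out of the integral. What is left is
\[
\int_{\Rm} e^{-i\langle x,y\rangle/(2\sin t)}\,e^{i\cot t\,|y|^2/4}\,\psi(y)\,dy .
\]

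The last step is to recognize this integral as $\mathscr F\bigl(e^{i\cot t|\cdot|^2/4}\psi\bigr)$ evaluated at $x/(4\pi\sin t)$. This uses the normalization \eqref{ft}, which I take to be $\mathscr F f(\xi)=\int f(y)e^{-2\pi i\langle \xi,y\rangle}dy$, the version for which Plancherel is unitary. With $\xi=x/(4\pi\sin t)$ one has $2\pi\langle\xi,y\rangle=\langle x,y\rangle/(2\sin t)$, so the phases match. Since $\vf\in\mathscr K(\Rm)$, $\psi$ is Schwartz and $e^{i\cot t|y|^2/4}\psi$ is as well, so the integral converges absolutely and the identification is legitimate pointwise for every $x$.

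There is no real analytic obstacle once \eqref{OUim} is available. The only delicate point is bookkeeping of normalizations, in two places. First, the sign and scaling convention in \eqref{ft} must produce exactly the argument $x/(4\pi\sin t)$; if \eqref{ft} used $e^{+2\pi i}$ instead, the argument would acquire a sign, or equivalently $\mathcal R$ would appear. Second, the unimodular constant $e^{-i\pi d/4}$ must come out correctly from $k(t)=0$. As a check, I would compare with \eqref{quarterOU} at $t=\pi/2$. There $\cot t=0$ and $\sin t=1$, so \eqref{final} reduces to
\[
(4\pi)^{-d/2}e^{i\pi d/4}e^{-i\pi d/4}\,\mathscr F\psi\!\left(\frac{x}{4\pi}\right),
\]
in agreement with \eqref{quarterOU} for $k=0$.
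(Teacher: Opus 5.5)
Your proposal is correct and follows the paper's proof: you specialize \eqref{OUim} to $J^+$ (the paper's \eqref{OUim4}), split $\frac{ie^{\pm it}}{4\sin t}=\frac{i\cot t}{4}\mp\frac14$ so that the Gaussians are absorbed into $\psi$ and into the left-hand side, and recognize the remaining integral as $\mathscr F\bigl(e^{i\cot t|\cdot|^2/4}\psi\bigr)$ at $x/(4\pi\sin t)$. The only cosmetic difference is that the paper first substitutes $y=4\pi\sin t\,z$ and then undoes the dilation, whereas you read off the scaled frequency directly.
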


As a consequence we obtain the sharp weighted dispersive estimate.
\begin{proposition}\label{P:disp}
Let $\vf\in\mathscr K(\Rm)$ and $t\notin\pi\mathbb Z$. For every $1\le p\le2$,
\begin{equation}\label{disp}
\|e^{-|\cdot|^2/4}e^{it\mathscr L}\vf\|_{L^{p'}(\Rm)}
\le
\left(\frac{p^{1/p}}{{p'}^{1/p'}}\right)^{d/2}
\frac{\|e^{-|\cdot|^2/4}\vf\|_{L^p(\Rm)}}
{(4\pi|\sin t|)^{d(\frac12-\frac1{p'})}},
\end{equation}
where $1/p+1/p'=1$.  The estimate is optimal in the sense that it cannot hold for $2<p\le\infty$, and equality is attained by $\vf(x)=e^{-\alpha|x|^2+|x|^2/4}$, $\Re\alpha>0$.
\end{proposition}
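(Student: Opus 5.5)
The plan is to read off the estimate from the Fourier representation \eqref{final} of Proposition \ref{P:semigroup} and then apply the sharp Hausdorff--Young inequality of Beckner with the best constant. The first step is to reduce to $t\in J^+=(0,\pi)$. By Corollary \ref{C:quarter} we have $e^{i\pi\mathscr L}=\mathcal R$, and $\mathcal R$ commutes with $e^{it\mathscr L}$ and with multiplication by the radial weight $e^{-|x|^2/4}$. Hence for $t\in(\pi,2\pi)$ we can write $e^{it\mathscr L}\vf=\mathcal R\,e^{i(t-\pi)\mathscr L}\vf$. Reflection preserves every $L^{p'}$ norm, and $|\sin t|=|\sin(t-\pi)|$. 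Combined with $2\pi$-periodicity, this reduces the claim to $t\in(0,\pi)$, where $\sin t>0$.

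For $t\in J^+$, take absolute values in \eqref{final}. The unimodular factors $e^{idt/2}$, $e^{-i\pi d/4}$ and $e^{i\cot t|x|^2/4}$ drop out, which gives
\[
\big|e^{-|x|^2/4}e^{it\mathscr L}\vf(x)\big|
=(4\pi\sin t)^{-d/2}\,\big|\mathscr F(g_t)\big(x/(4\pi\sin t)\big)\big|,
\qquad g_t=e^{i\cot t|\cdot|^2/4}\psi .
\]
Next change variables $x=4\pi\sin t\,\xi$ in the $L^{p'}$ norm. This produces a factor $(4\pi\sin t)^{d/p'}$, so the total power of $4\pi\sin t$ is $-d/2+d/p'=-d(\tfrac12-\tfrac1{p'})$. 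Then apply Beckner's sharp inequality $\|\mathscr F g\|_{L^{p'}}\le (p^{1/p}/p'^{1/p'})^{d/2}\|g\|_{L^p}$, valid for $1\le p\le 2$ with the unitary normalization \eqref{ft}. Finally, $|g_t|=|\psi|=e^{-|x|^2/4}|\vf|$, and this yields \eqref{disp}.

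For optimality, the range $p>2$ must be excluded. The estimate would give $\|e^{-|\cdot|^2/4}e^{it\mathscr L}\vf\|_{L^{p'}}\lesssim\|\psi\|_{L^p}$ with $p'<2$, so by \eqref{final} it would amount to a Hausdorff--Young bound $\|\mathscr F g\|_{L^{p'}}\lesssim\|g\|_{L^p}$ for $p>2$. That bound is false. Indeed, as $\psi$ ranges over $\mathscr S$, the functions $g_t$ range over all of $\mathscr S$, since the modulation is an invertible multiplier on $\mathscr S$. Testing on standard counterexamples then rules out this range: either a scaling argument, or Gaussians $e^{-\alpha|x|^2}$ with $\Re\alpha\to0$, for which $\|\mathscr F g\|_{p'}/\|g\|_p$ blows up when $p>2$.

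For the equality case, take $\vf=e^{-\alpha|x|^2+|x|^2/4}$, so that $\psi=e^{-\alpha|x|^2}$ and $g_t=e^{-(\alpha-i\cot t/4)|x|^2}$. This is again a complex Gaussian with positive real part, and complex Gaussians are exactly the extremizers in Beckner's theorem. The case $p=2$ is trivial.

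The main obstacle I anticipate is this extremizer bookkeeping. One must make sure that Beckner's equality statement really covers complex Gaussians $e^{-\beta|x|^2}$ with $\Re\beta>0$ arbitrary, including their modulated and rescaled forms, and that this holds in the present Fourier normalization. One must also check that the modulation by $e^{i\cot t|\cdot|^2/4}$ keeps us inside that class. The remaining steps are direct consequences of Proposition \ref{P:semigroup} and Corollary \ref{C:quarter}.
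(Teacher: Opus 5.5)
Your proof of the inequality \eqref{disp} is correct and follows the paper's argument: take moduli in \eqref{final}, rescale by $4\pi\sin t$, and apply Beckner's sharp Hausdorff--Young inequality. Your reduction of general $t\notin\pi\mathbb Z$ to $(0,\pi)$ via $e^{i\pi\mathscr L}=\mathcal R$ supplies a step that the paper's proof leaves implicit, since it only treats $t\in J^+$.

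The genuine problem is the equality case, exactly where you expected trouble. Your claim that ``complex Gaussians are exactly the extremizers in Beckner's theorem'' is false for $1\le p<2$. For $g(x)=e^{-\beta|x|^2}$ with $\Re\beta>0$, a direct computation in the normalization \eqref{ft} gives
\[
\frac{\|\mathscr F g\|_{L^{p'}}}{\|g\|_{L^p}}
=\left(\frac{p^{1/p}}{{p'}^{1/p'}}\right)^{d/2}
\left(\frac{|\beta|}{\Re\beta}\right)^{d\left(\frac12-\frac1p\right)} .
\]
For $p<2$ this equals Beckner's constant only when $\beta$ is real. This agrees with Lieb's theorem that Hausdorff--Young extremizers have a real positive definite quadratic part: a chirp $e^{ic|x|^2}$ strictly lowers the ratio. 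In your setting $g_t=e^{-(\alpha-\frac i4\cot t)|x|^2}$. Hence, within this family and for $1\le p<2$, equality in \eqref{disp} holds if and only if $\Im\alpha=\frac14\cot t$; for every other $\alpha$ the inequality is strict. The assertion that equality holds for arbitrary $\Re\alpha>0$ therefore cannot be proved, because it is false. The correct extremizers are the $t$-dependent functions $\vf(x)=e^{-(a+\frac i4\cot t)|x|^2+|x|^2/4}$, $a>0$, up to the translations, modulations and constant multiples allowed by Lieb. The paper's proof has the same gap: it only invokes ``equality iff $\psi$ is a Gaussian,'' but the Gaussian that must be real is $g_t$, not $\psi$.

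The same formula handles $2<p\le\infty$ cleanly. Keep $\Im\beta\neq0$ fixed and let $\Re\beta\to0$. Since now $d(\frac12-\frac1p)>0$, the ratio blows up. Because $\psi\mapsto g_t$ is a bijection of $\mathscr S(\Rm)$, this rules out \eqref{disp} in that range. Drop the ``scaling argument'' alternative, however. The ratio $\|\mathscr F g\|_{L^{p'}}/\|g\|_{L^p}$ is dilation invariant for every $p$, so dilations or real Gaussians alone can never produce a counterexample; the chirp is essential.
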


The explicit representation and sharp weighted dispersive estimate above, together with the uncertainty principles below, appeared in an earlier version of this work \cite{GarOUpre}.  The uncertainty principles belong to the broad circle of Hardy and dynamical Hardy principles for Schr\"odinger evolutions; see Hardy's original theorem \cite{Ha}, the works \cite{EKPVcpde,EKPVjems,EKPVduke,CEKPV,EKPVjlms}, and \cite{CP,SST,Fo,Veluma,FM}.  For Gaussian decay and related uncertainty questions for the harmonic oscillator, see also \cite{CF,KJO,RR}.  We record their statements here for completeness.

\begin{proposition}\label{P:main}
Assume that for some $a,b>0$,
\begin{equation}\label{L2}
\|e^{a|\cdot|^2}\vf\|_{L^2(\Rm,d\gamma)}
+\|e^{b|\cdot|^2}e^{is\mathscr L}\vf\|_{L^2(\Rm,d\gamma)}<\infty.
\end{equation}
If $ab\sin^2s\ge1/16$, then $\vf\equiv0$.
\end{proposition}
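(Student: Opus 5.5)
The plan is to reduce the statement to the classical $L^2$ form of Hardy's uncertainty principle, using the Fourier representation of Proposition \ref{P:semigroup}. The case $s\in\pi\mathbb Z$ is excluded, since then $\sin s=0$ and the hypothesis $ab\sin^2s\ge1/16$ fails. By Corollary \ref{C:quarter}, $e^{i2\pi\mathscr L}=I$, so I reduce $s$ modulo $2\pi$. If $s\in J^-$, then $e^{is\mathscr L}=\mathcal R\,e^{i(s-\pi)\mathscr L}$ by \eqref{OUperiodicity}. Since $\mathcal R$ preserves both the radial weight $e^{b|x|^2}$ and $d\gamma$, and $\sin^2(s-\pi)=\sin^2s$, it suffices to treat $s\in J^+=(0,\pi)$.

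\emph{Translating the hypotheses.} Fix $s\in J^+$ and set $\psi=e^{-|x|^2/4}\vf$ as in \eqref{psi}. By \eqref{phipsi}, the first condition in \eqref{L2} says exactly that $e^{a|x|^2}\psi\in L^2(\Rm)$. Put
\[
g(y)=e^{i\cot s\,|y|^2/4}\psi(y),
\]
so that $|g|=|\psi|$ and $e^{a|y|^2}g\in L^2(\Rm)$. The second condition says $e^{b|x|^2}e^{-|x|^2/4}e^{is\mathscr L}\vf\in L^2(\Rm)$. By \eqref{final} the modulus of this function is a constant times $|\mathscr F g(x/(4\pi\sin s))|$. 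Changing variables $\xi=x/(4\pi\sin s)$, we get
\[
e^{\beta|\xi|^2}\mathscr F g\in L^2(\Rm),
\qquad
\beta=16\pi^2 b\sin^2 s .
\]

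\emph{Applying Hardy.} With the normalization \eqref{ft} (kernel $e^{-2\pi i\langle x,\xi\rangle}$), the $L^2$ version of Hardy's theorem states the following: if $e^{\alpha|x|^2}g\in L^2$ and $e^{\beta|\xi|^2}\widehat g\in L^2$ with $\alpha\beta\ge\pi^2$, then $g\equiv0$. At equality the only candidates in Hardy's $L^\infty$ version are Gaussians, which fail to lie in the weighted $L^2$ spaces; the same holds for $\alpha\beta>\pi^2$ (see the Cowling--Price type results cited in \cite{CP,Fo}). Here $\alpha\beta=16\pi^2ab\sin^2s\ge\pi^2$ precisely when $ab\sin^2 s\ge1/16$. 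Hence $g=0$, so $\psi=0$ and $\vf\equiv0$.

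\emph{Main obstacle.} Proposition \ref{P:semigroup} is stated only for $\vf\in\mathscr K(\Rm)$, whereas here $\vf$ is merely in $L^2(d\gamma)$ with Gaussian decay. I would extend \eqref{final} to all $\vf\in L^2(d\gamma)$ as an identity a.e. Both sides depend continuously on $\vf$ into $L^2(\Rm,dx)$: the left side by unitarity \eqref{OUunitarity} and \eqref{phipsi}, the right side by Plancherel, since multiplication by unimodular factors and dilation are bounded. So density of $\mathscr K$ yields the identity, and the translation step above is then valid. A secondary point requiring care is that the equality case $\alpha\beta=\pi^2$ indeed yields $g=0$ in the $L^2$ formulation; I would verify this by passing to the $L^\infty$ Hardy theorem applied to a slightly mollified version, or simply cite the $L^2$ version directly.
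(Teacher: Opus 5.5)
Your proposal is correct and follows the paper's proof essentially step by step. Both arguments use the parity/periodicity reduction to $s\in(0,\pi)$, the auxiliary function $e^{i\cot s\,|x|^2/4}\psi$, Proposition \ref{P:semigroup} with the dilation $x=4\pi\sin s\,\xi$ to produce the Fourier weight $e^{16\pi^2 b\sin^2 s\,|\xi|^2}$, and then the $L^2$ Hardy theorem (Theorem \ref{T:har2}) with threshold $\alpha\beta\ge\pi^2$; your density argument extending \eqref{final} from $\mathscr K(\Rm)$ to all $\vf\in L^2(\Rm,d\gamma)$ (continuity of both sides into $L^2(dx)$ via unitarity and Plancherel) is also correct, and it makes explicit a step the paper uses tacitly.
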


The hypothesis automatically excludes $s\in\pi\mathbb Z$.  Combining Proposition \ref{P:semigroup} with Hardy's uncertainty principle also gives the pointwise counterpart.

\begin{proposition}\label{P:main2}
Suppose that for some $C,a,b>0$ and every $x\in\Rm$,
\begin{equation}\label{Linfty}
|e^{-|x|^2/4}\vf(x)|\le Ce^{-a|x|^2},\qquad
|e^{-|x|^2/4}(e^{is\mathscr L}\vf)(x)|\le Ce^{-b|x|^2}.
\end{equation}
If $ab\sin^2s\ge1/16$, then $\vf\equiv0$.
\end{proposition}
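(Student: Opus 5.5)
The plan is to reduce Proposition \ref{P:main2} to the classical pointwise Hardy uncertainty principle on $\Rm$, using the Fourier representation of Proposition \ref{P:semigroup}.

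First, I reduce to $s\in J^+=(0,\pi)$. The hypothesis $ab\sin^2s\ge1/16$ already excludes $s\in\pi\mathbb Z$. Since $\mathscr L$ has real coefficients, $e^{-is\mathscr L}\vf=\overline{e^{is\mathscr L}\overline{\vf}}$, and moduli are unchanged. Hence the case $s<0$ follows from the case $-s>0$ applied to $\overline\vf$. By Corollary \ref{C:quarter}, $e^{i(s+k\pi)\mathscr L}=\mathcal R^k e^{is\mathscr L}$, and the reflection $\mathcal R$ preserves the radial bounds \eqref{Linfty}. Since $\sin^2$ is $\pi$-periodic, it therefore suffices to treat $s\in(0,\pi)$.

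Next, I put the hypothesis in Fourier form. Set $g(y)=e^{i\cot s\,|y|^2/4}\psi(y)$, with $\psi=e^{-|\cdot|^2/4}\vf$ as in \eqref{psi}. The first bound in \eqref{Linfty} gives $|g(y)|\le Ce^{-a|y|^2}$, so $g\in L^1\cap L^2$. By \eqref{final}, extended from $\mathscr K(\Rm)$ to such $\vf$ by density and Plancherel (the identity then holds a.e., and $\mathscr F g$ is continuous), the second bound becomes
\[
\Big|\mathscr F g\Big(\frac{x}{4\pi\sin s}\Big)\Big|\le C'(\sin s)^{d/2}e^{-b|x|^2}.
\]
Putting $\xi=x/(4\pi\sin s)$, this reads $|\mathscr F g(\xi)|\le C''e^{-16\pi^2b\sin^2s\,|\xi|^2}$.

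Then I apply Hardy's theorem, in the normalization $\mathscr F f(\xi)=\int e^{-2\pi i\langle x,\xi\rangle}f(x)\,dx$. I expect this to be the normalization \eqref{ft}, consistent with the argument $x/(4\pi\sin t)$ in \eqref{final}. The theorem says: if $|f|\le Ce^{-\pi A|x|^2}$ and $|\mathscr F f|\le Ce^{-\pi B|\xi|^2}$ with $AB>1$, then $f\equiv0$; if $AB=1$, then $f=c\,e^{-\pi A|x|^2}$. Here $\pi A=a$ and $\pi B=16\pi^2b\sin^2 s$, so
\[
AB=16\,ab\sin^2s\ge1 .
\]
When the inequality is strict, $g\equiv0$, hence $\psi\equiv0$ and $\vf\equiv0$.

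The main obstacle is the borderline case $16ab\sin^2s=1$. There Hardy's theorem only yields $g=c\,e^{-a|y|^2}$, that is, $\psi=c\,e^{-(a+i\cot s/4)|y|^2}$. I would check directly whether this Gaussian is compatible with both bounds in \eqref{Linfty}. If $|\mathscr F g|$ saturates the exponent exactly, the equality case is not excluded by this argument. Then I would recheck the normalization constants in \eqref{ft} and \eqref{final}, since an extra factor would shift the threshold, and the correct statement may require either strict inequality or the conclusion ``$\vf$ is a multiple of $e^{|x|^2/4-(a+i\cot s/4)|x|^2}$''. Pinning down this constant bookkeeping, rather than the reduction itself, is where I expect the real care to be needed.
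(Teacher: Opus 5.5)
Your route is the one the paper intends: its only justification is the remark that Proposition \ref{P:semigroup} combined with Hardy's theorem gives the pointwise version. Your argument is complete when $ab\sin^2 s>1/16$. The reduction to $s\in(0,\pi)$, the a.e.\ extension of \eqref{final} by density with continuity of $\mathscr F g$, and the bookkeeping $AB=16ab\sin^2 s$ are all correct. The complex-conjugation step for $s<0$ is harmless but unnecessary: writing $s=k\pi+\tau$ with $k\in\mathbb Z$ of either sign already covers negative times.

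Your hesitation about the borderline should be turned into a definite conclusion. The proposition is false when $ab\sin^2 s=1/16$, and rechecking constants will not rescue it. The normalizations in \eqref{ft} and \eqref{final} are right; they produce exactly the threshold $1/16$ of the $L^2$ version, Proposition \ref{P:main}.

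The simplest counterexample is $\vf\equiv1$. Since $\mathscr L1=0$, we have $e^{is\mathscr L}1=1$. Both bounds in \eqref{Linfty} then hold with $C=1$ and $a=b=\tfrac14$, and at $s=\pi/2$ one gets $ab\sin^2 s=1/16$ while $\vf\not\equiv0$.

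More generally, fix any $s\notin\pi\mathbb Z$ and $a>0$, and set $\vf(x)=\exp\big((\tfrac14-a-\tfrac i4\cot s)|x|^2\big)$. Then $g=e^{-a|y|^2}$, and \eqref{final} together with the parity reduction gives
$|e^{-|x|^2/4}e^{is\mathscr L}\vf(x)|=(4a|\sin s|)^{-d/2}e^{-|x|^2/(16a\sin^2 s)}$.
This is \eqref{Linfty} with $b=(16a\sin^2 s)^{-1}$, exactly at the threshold.

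The contrast with Proposition \ref{P:main} is that the $L^2$ Hardy theorem (Theorem \ref{T:har2}) includes the endpoint, while the pointwise Hardy theorem does not. What your argument proves, and what is true, is the following. If $ab\sin^2 s>1/16$, then $\vf\equiv0$. If $ab\sin^2 s=1/16$, then $\vf=c\exp\big((\tfrac14-a-\tfrac i4\cot s)|x|^2\big)$. The statement, and the paper's one-line justification, overlook this endpoint.
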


\begin{proof}[Proof of Proposition \ref{P:OUim}]

We begin with a simple, but critical observation. Suppose that $v$ and $f$ are connected by the relation
\begin{equation}\label{drift0}
v(x,t) = f(e^{i t} x,t).
\end{equation}
Then, $f$ is a solution of the Cauchy problem \eqref{cp0} if and only if $v$ solves the problem
\begin{equation}\label{cpGsenzadrift00}
\begin{cases}
\p_t v - i U'(t) \Delta v = 0,
\\
v(x,0) = \vf(x),
\end{cases}
\end{equation} 
where we have let
\begin{equation}\label{Qt0}
U(t) =  \int_0^t e^{-2is} ds = \frac{1-e^{-2it}}{2i} = e^{-it} \frac{e^{it}-e^{-it}}{2i} = e^{-it} \sin t.
\end{equation}
To prove that $v$ solves \eqref{cpGsenzadrift00}, we argue as follows. The chain rule gives from \eqref{drift0}
\[
v_t(x,t) = i e^{it} \sa x,\nabla f(e^{it} x,t)\da + f_t(e^{it} x,t).
\]
On the other hand, the PDE in \eqref{cp0} gives
\[
f_t(e^{it} x,t) = i \Delta f(e^{it} x,t) - i e^{it} \sa x,\nabla f(e^{it} x,t)\da.
\]
Combining the latter two equations, we infer that $v$ solves 
\[
v_t(x,t) = i \Delta f(e^{it} x,t).
\]
Next, differentiating \eqref{drift0} we find
\[
\Delta v(x,t) = e^{2it} \Delta f(e^{it} x,t).
\]
We thus conclude that 
\[
v_t(x,t) = i e^{-2it} \Delta v(x,t) = i U'(t) \Delta v(x,t),
\]
where in the second equality we have used \eqref{Qt0}.
Summarising, the function $v$ solves the problem \eqref{cpGsenzadrift00}.
To find a representation formula for the latter, we use the Fourier transform.
Supposing that $v$ be a solution, we define
\begin{equation}\label{pFTv}
\hat v(\xi,t) = \mathscr F(v)(\xi,t) = \int_{\Rm} e^{-2\pi i\sa \xi,x\da} v(x,t) dx.
\end{equation}
Then \eqref{cpGsenzadrift00} is transformed into
\begin{equation}\label{cpGsenzadrift0}
\begin{cases}
\p_t \hat v + 4\pi^2 i U'(t)|\xi|^2 \hat v = 0,
\\
\hat v(\xi,0) = \hat \vf(\xi),
\end{cases}
\end{equation} 
whose solution is given by
\begin{equation}\label{hatv}
\hat v(\xi,t) = \hat \vf(\xi) e^{-4\pi^2 i U(t) |\xi|^2}.
\end{equation} 
Note that, with $U(t)$ as in \eqref{Qt0}, for every $t\in J$ the matrix $Q(t) = U(t) I_d$ is invertible. Moreover, we have
\begin{equation}\label{iQ}
i Q(t) = i U(t) I_d = i(\cos t - i \sin t) \sin t\ I_d = \sin^2 t\ I_d + i \frac{\sin{2t}}2 I_d. 
\end{equation}
We now invoke \cite[Theorem 7.6.1]{Hobook}, which we formulate as follows: Let $A\in G\ell(\mathbb C,d)$ be such that $A^\star = A$ and $\Re A \ge 0$. Then 
\begin{equation}\label{gengaussi2}
\mathscr F\left(\frac{(4\pi)^{-\frac{d}{2}}}{\sqrt{\operatorname{det} A}} e^{- \frac{\sa A^{-1}\cdot,\cdot\da}{4}}\right)(\xi) =
e^{- 4 \pi^2  \sa A\xi,\xi\da},
\end{equation}
where $\sqrt{\operatorname{det} A}$ is the unique analytic branch such that $\sqrt{\operatorname{det} A}>0$ when $A$ is real. 
If in \eqref{gengaussi2} we take 
\[
A = i Q(t) = i U(t) I_d =  i e^{-it} \sin t\ I_d,
\]
with  $t\in J^+$, then \eqref{iQ} gives $\Re A = \sin^2 t\ I_d \ge 0$, and
\[
A^{-1} = - i  \frac{e^{it}}{\sin t} I_d.
\]
We thus find 
\begin{equation}\label{larsetto2}
e^{-4\pi^2 i U(t) |\xi|^2}  = \mathscr F\left(\frac{e^{\frac{idt}2}}{e^{\frac{i\pi d}{4}} (\sin t)^{\frac d2}}(4\pi
)^{-\frac{d}{2}} e^{i e^{it}\frac{|\cdot|^2}{4 \sin t}}\right)(\xi).
\end{equation}
From \eqref{hatv} and \eqref{larsetto2} we conclude that for every $x\in \Rm$ and $t\in J^+$
\begin{equation}\label{hatv2}
v(x,t) = \frac{e^{\frac{idt}2}}{e^{\frac{i\pi d}{4}} (\sin t)^{\frac d2}}(4\pi
)^{-\frac{d}{2}} \int_{\Rm} e^{i e^{it}\frac{|y-x|^2}{4 \sin t}} \vf(y) dy.
\end{equation} 
Finally, keeping \eqref{drift0} in mind, after some elementary algebraic manipulations, we obtain \eqref{OUim} for $0<t<\pi$, where $k(t)=0$. The corresponding formula on $(\pi,2\pi)$ follows by the same argument, observing that in this interval $A=e^{i\frac{3\pi}{2}}e^{-it}|\sin t|I_d$.

We now extend the representation to every noncaustic time. By the Hermite spectral resolution of $\mathscr L$ (recalled in Subsection \ref{S:caustics}) one has
\begin{equation}\label{parity-pre}
e^{i\pi\mathscr L}=\mathcal R,\qquad (\mathcal R g)(x)=g(-x),
\end{equation}
and therefore $e^{ik\pi\mathscr L}=\mathcal R^k$ for every $k\in\mathbb Z$. Given $t\notin\pi\mathbb Z$, write
\[
t=k\pi+\tau,\qquad k=\left\lfloor\frac{t}{\pi}\right\rfloor,
\qquad 0<\tau<\pi.
\]
The group property and \eqref{parity-pre} give
\[
e^{it\mathscr L}\vf(x)
=\big(e^{i\tau\mathscr L}\vf\big)((-1)^k x).
\]
Applying the already established formula on $(0,\pi)$ with $(-1)^kx$ in place of $x$, and using
\[
\sin\tau=|\sin t|,\qquad (-1)^k=\operatorname{sgn}(\sin t),
\qquad e^{i\tau}=(-1)^k e^{it},
\]
together with
\[
e^{\frac{id\tau}{2}}e^{-\frac{i\pi d}{4}}
=e^{\frac{idt}{2}}e^{-\frac{i\pi d}{4}(2k+1)},
\]
yields exactly \eqref{OUim}. At $t=k\pi$, \eqref{parity-pre} gives \eqref{OUim-caustic}. This also shows directly that the apparent singularities of \eqref{OUim} at $\pi\mathbb Z$ belong to the kernel representation, not to the evolution operator.

\end{proof}

\subsection{Caustics and the global unitary group}\label{S:caustics}

The factors $(\sin t)^{-d/2}$ and $\cot t$ in \eqref{OUim} make the times $t=k\pi$ look singular.  They are singular only for this nondegenerate oscillatory-Gaussian representation.  The operator $\mathscr L=\Delta-x\cdot\nabla$ is self-adjoint in $L^2(d\gamma)$ and its Hermite-polynomial eigenfunctions satisfy
\[
\mathscr L H_\alpha=-|\alpha|H_\alpha,\qquad \alpha\in\mathbb N^d.
\]
Since the Hermite polynomials form a complete orthogonal system in $L^2(d\gamma)$, spectral calculus gives
\[
e^{it\mathscr L}H_\alpha=e^{-it|\alpha|}H_\alpha.
\]
At $t=\pi$, using $H_\alpha(-x)=(-1)^{|\alpha|}H_\alpha(x)$, we obtain
\begin{equation}\label{parity}
e^{i\pi\mathscr L}f(x)=f(-x),\qquad f\in L^2(d\gamma),
\end{equation}
and consequently
\begin{equation}\label{periodicity}
e^{i2\pi\mathscr L}=I.
\end{equation}
Thus the unitary evolution is defined and regular for every real time.  At $t=k\pi$ the kernel in \eqref{OUim} concentrates, in the distributional sense, onto the graph $y=(-1)^kx$.  In the terminology of oscillatory-integral and Fourier-integral-operator theory these are \emph{caustic times}: the projection underlying the nondegenerate quadratic phase degenerates, while the evolution operator itself remains well defined.  This distinction is also the reason that a free-type global dispersive estimate cannot hold: the flow is recurrent rather than globally dispersive.

It may be of interest to compare \eqref{OUim} with the well-known Mehler representation (see, for instance, \cite{OU,Bo2,Bo,CMG,StroockPDE,LMP}) 
\begin{align}\label{gustavo}
u(x,t) & = (4\pi)^{- \frac d2} e^{d t \sqrt \omega} \left(\frac{2\sqrt \omega}{\sinh(2t\sqrt \omega)}\right)^{\frac d2}
\\
& \times \int_{\Rm} \exp\left( -  \frac{\sqrt \omega}{2 \sinh(2t\sqrt \omega)} |e^{t\sqrt \omega} y - e^{-t\sqrt \omega} x|^2\right) \vf(y) dy
\notag
\end{align}
for the solution of the Cauchy problem for the Ornstein-Uhlenbeck operator
\begin{equation}\label{cpou}
\begin{cases}
u_t - \Delta u + 2 \sqrt \omega \langle x,\nabla u\rangle  = 0,\ \ \ \ \ \omega>0,
\\
u(x,0) = \vf(x).
\end{cases}
\end{equation}
 If one takes $\omega = \frac 14$, keeping in mind that $\sinh it = i \sin t$, then it is clear that by \emph{formally} substituting $t\to it$ in \eqref{gustavo}, one obtains the case $t\in J^+$ of \eqref{OUim}. Such formal manipulation is reminiscent of the physicist' Wick rotation, see \cite[Section 3]{Wi}.

\begin{proof}[Proof of Proposition \ref{P:semigroup}]
To further unravel \eqref{OUim}, and also to better clarify the role of the class $\mathscr K(\Rm)$ in \eqref{K}, note that if for $t\in J^+$ we expand
\begin{equation}\label{expa}
\frac{|e^{it/2}y-e^{-it/2} x|^2}{4 \sin t} = \frac{e^{it}|y|^2 + e^{-it}|x|^2 - 2\sa x,y\da}{4 \sin t},
\end{equation}
we find 
\begin{equation}\label{OUim4}
\left(e^{it\mathscr L}\vf\right)(x) = \frac{(4\pi
)^{-\frac{d}{2}}e^{\frac{idt}2}}{e^{\frac{i\pi d}{4}} (\sin t)^{\frac d2}} \int_{\Rm} e^{i \frac{e^{it}|y|^2 + e^{-it}|x|^2 - 2\sa x,y\da}{4 \sin t}} \vf(y) dy.
\end{equation}
The change of variable $y = 4 \pi \sin t\ z$ in the integral in \eqref{OUim4} gives
\begin{align*}
& \int_{\Rm} e^{- i \frac{\sa x,y\da}{2 \sin t}} e^{i \frac{e^{it}|y|^2 + e^{-it}|x|^2}{4 \sin t}} \vf(y) dy = (4 \pi \sin t)^d e^{i \frac{e^{-it}|x|^2}{4 \sin t}} \int_{\Rm} e^{- 2 \pi i \sa x,z\da} e^{i \frac{e^{it}  |4 \pi \sin t\ z|^2}{4 \sin t}} \vf(4 \pi \sin t\ z) dz
\\
& = (4 \pi \sin t)^d e^{\frac{|x|^2}{4}} e^{i \frac{\cot t |x|^2}{4}} \int_{\Rm} e^{- 2 \pi i \sa x,z\da} e^{i \frac{\cot t |4 \pi \sin t\ z|^2}{4}} e^{- \frac{|4 \pi \sin t\ z|^2}{4}} \vf(4 \pi \sin t\ z) dz.
\end{align*}
Keeping \eqref{psi} in mind, 
we thus obtain from the above integral  
\begin{align*}
& \int_{\Rm} e^{- i \frac{\sa x,y\da}{2 \sin t}} e^{i \frac{e^{it}|y|^2 + e^{-it}|x|^2}{4 \sin t}} \vf(y) dy = (4 \pi \sin t)^d e^{\frac{|x|^2}{4}} e^{i \frac{\cot t |x|^2}{4}} \mathscr F\left(\delta_{4 \pi \sin t}\ e^{i \frac{\cot t |\cdot|^2}4} \psi\right)(x)
\\
& = e^{\frac{|x|^2}{4}} e^{i \frac{\cot t |x|^2}{4}} \mathscr F\left(e^{i \frac{\cot t |\cdot|^2}4} \psi\right)(\frac{x}{4 \pi \sin t}),
\end{align*}
where we have denoted by $\delta_\la f(x) = f(\la x)$ the action of the dilation operator on a function $f$.
Going back to \eqref{OUim4}, we have finally established \eqref{final}.

\end{proof}

By Proposition \ref{P:OUim} and Remark \ref{R:stable}, the full group
$e^{it\mathscr L}$ preserves $\mathscr K(\Rm)$ and is unitary in
$L^2(\Rm,d\gamma)$. Formula \eqref{final} further unveils the
intertwining between the group $e^{it\mathscr L}$ and the Fourier
transform. 
We now use Proposition \ref{P:semigroup} to provide the 

\begin{proof}[Proof of Proposition \ref{P:disp}]
We first rewrite \eqref{final} in the following fashion
\begin{equation}\label{sfinalicchio}
\mathscr F\left(e^{i \frac{\cot t |\cdot|^2}4} \psi\right)(\frac{x}{4 \pi \sin t}) = (4\pi
)^{\frac{d}{2}}  \frac{e^{\frac{i\pi d}{4}}}{e^{\frac{idt}2}} (\sin t)^{\frac d2} e^{-i \frac{\cot t |x|^2}{4}} e^{- \frac{|x|^2}{4}} \left(e^{it\mathscr L}\vf\right)(x).
\end{equation}
The identity \eqref{sfinalicchio} has the following direct consequence 
\begin{equation}\label{sfinalicchietto}
\left|\mathscr F\left(e^{i \frac{\cot t |\cdot|^2}4} \psi\right)(\frac{x}{4 \pi \sin t})\right| = (4\pi
)^{\frac{d}{2}} |\sin t|^{\frac d2} e^{- \frac{|x|^2}{4}} |(e^{it\mathscr L}\vf)(x)|.
\end{equation}
If now $1\le p \le 2$, recall that in his celebrated paper \cite{Be} Beckner computed the sharp constant in the Hausdorff-Young inequality, and proved that for any $\psi\in L^p(\Rm)$ one has
\begin{equation}\label{HY}
\left(\int_{\Rm}|\mathscr F \psi(y)|^{p'} dy\right)^{\frac{1}{p'}} \le \left(\frac{p^{1/p}}{{p'}^{1/p'}}\right)^{\frac d2} \left(\int_{\Rm}|\psi(y)|^p dy\right)^{\frac 1p}.
\end{equation}
He also showed that equality is attained in \eqref{HY} if and only if $\psi$ is a Gaussian. The change of variable $y = \frac{x}{4 \pi \sin t}$ in \eqref{HY} gives
\begin{equation*}
\left(\int_{\R^d}|\mathscr F \psi(\frac{x}{4 \pi \sin t})|^{p'} dx\right)^{\frac{1}{p'}} \le \left(\frac{p^{1/p}}{{p'}^{1/p'}}\right)^{\frac d2} (4\pi |\sin t|)^{\frac{d}{p'}}\left(\int_{\Rm}|\psi(y)|^p dy\right)^{\frac 1p}.
\end{equation*}
We then turn to the proof of \eqref{disp}. Let $\vf\in \mathscr K(\Rm)$ and $t\in J^+$. We have from \eqref{sfinalicchietto} 
\begin{align*}
& \left(\int_{\R^d}|e^{- \frac{|x|^2}{4}} (e^{it\mathscr L}\vf)(x)|^{p'} dx\right)^{\frac{1}{p'}} = (4\pi
)^{-\frac{d}{2}} |\sin t|^{-\frac d2} \left(\int_{\Rm} \left|\mathscr F\left(e^{i \frac{\cot t |\cdot|^2}4} \psi\right)(\frac{x}{4 \pi \sin t})\right|^{p'} dx\right)^{\frac{1}{p'}} 
\\
& \le (4\pi
)^{-\frac{d}{2}} |\sin t|^{-\frac d2} \left(\frac{p^{1/p}}{{p'}^{1/p'}}\right)^{\frac d2} (4\pi |\sin t|)^{\frac{d}{p'}}\left(\int_{\Rm} |\psi(x)|^{p} dx\right)^{\frac{1}{p}}. 
\end{align*}
Keeping in mind that $\psi(x) = e^{- \frac{|x|^2}{4}} \vf(x)$, see \eqref{psi}, we reach the desired conclusion \eqref{disp} from the latter inequality.

\end{proof}

We will  return to more general dispersive estimates for the group $e^{it\mathscr L}$ in a future study. 
 


\section{The harmonic oscillator and the Niederer transform}\label{S:app}

We begin by making explicit the link between the Ornstein--Uhlenbeck operator and the quantum-mechanical harmonic oscillator.  This conjugation is also the natural point at which the classical lens transform enters the discussion. The main tool is the nonlinear Schr\"odinger equation of Riccati type \eqref{riccati} in Lemma \ref{L:ouho}. Consider the harmonic oscillator \footnote{This operator is usually defined as $H = \Delta - |x|^2$. We are using the $1/4$ normalisation in order not to have to change in $\Delta - 2 \sa v,\nabla\da$ that of the Ornstein-Uhlenbeck operator in the statement of Proposition \ref{P:connect} below.}  in $\Rm$
\begin{equation}\label{H}
H = \Delta - \frac{|x|^2}4
\end{equation}
and the Cauchy problem in $\Rm\times (0,\infty)$ for the associated Schr\"odinger operator
\begin{equation}\label{cpHO}
\p_t u - i H u = 0,\ \ \ \ \ \ \ \ u(x,0) = u_0(x),
\end{equation}
where the initial datum $u_0$ will be taken e.g. in $\mathscr S(\Rm)$. 
The following lemma establishes a general principle, one interesting consequence of which is that it allows to connect \eqref{cpHO} to the problem \eqref{cp0}, and in fact show that they are equivalent. The functions $\Phi$ and $h$ in its statement are assumed complex-valued.

\begin{lemma}\label{L:ouho}
Let $\Phi\in C(\R^{d+1})$ and $h\in C^2(\R^{d+1})$ be connected by the following  nonlinear Schr\"odinger equation
\begin{equation}\label{riccati}
i h_t + \Delta h - |\nabla h|^2  = \Phi.
\end{equation}
Then $u$ solves the partial differential equation
\begin{equation}\label{pdeho}
P u = i(\Delta u + \Phi u) - u_t = 0
\end{equation}
if and only if $f$ defined by the transformation
\begin{equation}\label{genexp}
u(x,t) = e^{-h(x,t)} f(x,t),
\end{equation} 
solves the equation
\begin{equation}\label{PDEou}
i(\Delta f - 2 \langle\nabla h,\nabla f\rangle) - f_t = 0.
\end{equation}
\end{lemma}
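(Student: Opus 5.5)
The proof is a direct computation: substitute \eqref{genexp} into $P$ and show that
\[
Pu=e^{-h}\big(i(\Delta f-2\langle\nabla h,\nabla f\rangle)-f_t\big),
\]
identically in $(x,t)$. Since $e^{-h}$ never vanishes, even for complex-valued $h$, the two equations \eqref{pdeho} and \eqref{PDEou} are then equivalent. Throughout, $u$ and $f$ are assumed $C^2$ in $x$ and $C^1$ in $t$. Because $h\in C^2$, $u$ has this regularity if and only if $f$ does.

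\emph{Convention.} Since $h$ is complex-valued, $|\nabla h|^2$ in \eqref{riccati} must be read as the bilinear expression $\langle\nabla h,\nabla h\rangle=\sum_j(\p_j h)^2$, with no complex conjugation. The cancellation below depends on this. I will also write $\langle\cdot,\cdot\rangle$ bilinearly for complex vectors.

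First, I compute the derivatives of $u=e^{-h}f$ by the product and chain rules:
\[
u_t=e^{-h}(f_t-h_tf),\qquad \nabla u=e^{-h}(\nabla f-f\nabla h),
\]
\[
\Delta u=e^{-h}\big(\Delta f-2\langle\nabla h,\nabla f\rangle-f\Delta h+f\langle\nabla h,\nabla h\rangle\big).
\]

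Second, I insert these into $Pu=i\Delta u+i\Phi u-u_t$ and collect the terms carrying an undifferentiated $f$. Their total coefficient is
\[
e^{-h}\big(-i\Delta h+i|\nabla h|^2+i\Phi+h_t\big)f .
\]
By \eqref{riccati} we have $i\Phi=-h_t+i\Delta h-i|\nabla h|^2$, so this coefficient vanishes identically. What remains is exactly $e^{-h}\big(i(\Delta f-2\langle\nabla h,\nabla f\rangle)-f_t\big)$, which is the claimed identity.

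There is no real obstacle here. The only points needing care are the bilinear reading of $|\nabla h|^2$ for complex $h$ and the sign bookkeeping of the factor $i$ when multiplying \eqref{riccati} through.
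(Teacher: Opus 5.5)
Your proposal is correct and is essentially the paper's proof: expand $P(e^{-h}f)$ by the product rule and note that the coefficient of the undifferentiated $f$, namely $i\bigl[\Phi-ih_t-(\Delta h-|\nabla h|^2)\bigr]$, vanishes by \eqref{riccati}. Your remark that $|\nabla h|^2$ must be read bilinearly, as $\sum_j(\p_j h)^2$, when $h$ is complex-valued is a useful clarification, since the paper's formula for $\Delta(e^{-h})$ relies on it implicitly.
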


\begin{proof}
With $u$ as in \eqref{genexp}, we find
\begin{align*}
P u & = i \Delta(e^{-h} f) + i \Phi e^{-h} f - (e^{-h} f)_t
\\
& = i f \Delta(e^{-h}) + i e^{-h} \Delta f + 2 i \langle\nabla(e^{-h}),\nabla f\rangle + i \Phi e^{-h} f
 - (e^{-h})_t f -  e^{-h} f_t
\\
& = i e^{-h} f |\nabla h|^2 - i e^{-h} f \Delta h + i e^{-h} \Delta f - 2 i e^{-h} \langle\nabla h,\nabla f\rangle + i \Phi e^{-h} f
 - (e^{-h})_t f -  e^{-h} f_t
 \\
 & = e^{-h}\left\{i \left[\Phi - i h_t - (\Delta h - |\nabla h|^2 )\right] f + i \Delta f - 2 i \langle\nabla h,\nabla f\rangle - f_t\right\}.
\end{align*}
This computation proves that if $h$ and $\Phi$ solve \eqref{riccati}, then $u$ solves \eqref{pdeho} if and only if $f$ is a solution of \eqref{PDEou}.

\end{proof}

With Lemma \ref{L:ouho} in hand, we now return to \eqref{H} and prove the following result.
\begin{proposition}\label{P:connect}
A function $u$ solves the Cauchy problem \eqref{cpHO} if and only if the function
\begin{equation}\label{fu}
f(x,t) =  u(x,t)\ e^{\frac{|x|^2}4 + i \frac d2 t}
\end{equation}
solves \eqref{cp0} with $f(x,0) = \vf(x) = u_0(x)\ e^{\frac{|x|^2}4}$.
\end{proposition}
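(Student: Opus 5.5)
We apply Lemma \ref{L:ouho} with a complex-valued $h$ chosen so that both sides of that lemma match the two equations in question. The Cauchy problem \eqref{cpHO} reads $u_t = i(\Delta u - \frac{|x|^2}{4}u)$, which is \eqref{pdeho} with $\Phi = -\frac{|x|^2}{4}$. The Ornstein--Uhlenbeck equation \eqref{cp0} reads $f_t = i(\Delta f - \langle x,\nabla f\rangle)$, which is \eqref{PDEou} provided $2\nabla h = x$. The relation \eqref{fu} corresponds to \eqref{genexp} with
\[
h(x,t)=\frac{|x|^2}{4}+i\frac d2 t,
\]
since $u=e^{-h}f$ is equivalent to $f=u\,e^{\frac{|x|^2}{4}+i\frac d2 t}$. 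Indeed $\nabla h = x/2$, so $2\langle\nabla h,\nabla f\rangle=\langle x,\nabla f\rangle$.

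The key step is to check that this $h$ and $\Phi=-|x|^2/4$ satisfy the Riccati-type equation \eqref{riccati}. We have
\[
ih_t=i\cdot i\frac d2=-\frac d2,\qquad \Delta h=\frac d2,\qquad |\nabla h|^2=\frac{|x|^2}{4},
\]
so that $ih_t+\Delta h-|\nabla h|^2=-\frac{|x|^2}{4}=\Phi$, as required. Thus the time-dependent phase $e^{idt/2}$ is exactly what cancels the constant $d/2$ coming from $\Delta h$. This is consistent with the conjugation identity $\omega\mathscr L\omega^{-1}=\Delta-\frac{|x|^2}{4}+\frac d2$ stated in the introduction. Since $h\in C^\infty$ and $\Phi$ is continuous, Lemma \ref{L:ouho} applies and gives: $u$ solves $u_t-iHu=0$ if and only if $f$ solves $i\partial_t f+\mathscr L f=0$.

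Finally, the initial data match because $h(x,0)=|x|^2/4$, whence $f(x,0)=u_0(x)e^{|x|^2/4}=\vf(x)$. Conversely, if $f$ solves \eqref{cp0} with this $\vf$, then $u=e^{-h}f$ solves \eqref{cpHO} with $u(x,0)=u_0$.

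There is no real obstacle here: the content is the identification of $h$ and the one-line verification of \eqref{riccati}. The only point requiring care is the interpretation of \say{solves}. For classical ($C^2$ in $x$, $C^1$ in $t$) solutions the lemma applies pointwise. If one wants the statement at the level of the $L^2$ groups, one notes that multiplication by $e^{-|x|^2/4}$ maps $L^2(d\gamma)$ isometrically (up to the factor $(2\pi)^{d/4}$, see \eqref{phipsi}) onto $L^2(\Rm)$ and maps $\mathscr K(\Rm)$ onto $\mathscr S(\Rm)$. The correspondence for smooth data in $\mathscr K(\Rm)$, together with Remark \ref{R:stable} and density, then transfers it to the unitary evolutions.
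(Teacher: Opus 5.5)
Your proof is correct and follows the paper's argument. Both apply Lemma \ref{L:ouho} with $\Phi=-\frac{|x|^2}{4}$ and $h=\frac{|x|^2}{4}+i\frac d2 t$; the only difference is that the paper finds $h$ by making the ansatz $h=A|x|^2+Bt$ and solving for $A,B$, whereas you verify \eqref{riccati} directly. Your remark that the condition $2\nabla h=x$ is what selects $A=\frac14$ (rather than the other root $A=-\frac14$) is a small but welcome precision.
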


\begin{proof}
It is clear from \eqref{pdeho} that, in order to obtain from it the PDE in \eqref{cpHO}, we need  $\Phi(x,t) = - \frac{|x|^2}4$. With this choice, we look for a function $h(x,t)$ that is connected to such $\Phi$ by the equation \eqref{riccati}. A natural ansatz is $h(x,t) = A |x|^2 + B t$, with $A, B\in \mathbb C$ to be determined. Since $\Delta h = 2d A$, $h_t = B$ and $|\nabla h|^2 = 4 A^2 |x|^2$, to satisfy \eqref{riccati} we want
\[
i B + 2d A - 4 A^2 |x|^2 = - \frac{|x|^2}4,
\]
which holds iff $A = \frac{1}4, B =  i \frac d2$, and thus
\begin{equation}\label{h}
h(x,t) = \frac{|x|^2}4 + i \frac d2 t.
\end{equation}
With such choice of $h(x,t)$, the equation \eqref{genexp} in Lemma \ref{L:ouho} shows that $f(x,t)$ defined in \eqref{fu} solves the Cauchy problem \eqref{cp0}, with $f(x,0) = \vf(x) = u_0(x)\ e^{\frac{|x|^2}4}$. The ``if and only if" character of the statement is obvious.

\end{proof}

If $u_0\in \So$, then it is clear that $e^{\frac{|\cdot|^2}4} u_0\in \mathscr K(\Rm)$. According to Proposition \ref{P:connect}, we can express the group $e^{itH}$ by the formula
\begin{equation}\label{eitH}
e^{itH} u_0(x) = e^{-\frac{|x|^2}4 - i \frac d2 t} e^{it\mathscr L}(e^{\frac{|\cdot|^2}4} u_0)(x).
\end{equation}
Applying \eqref{OUim}, we infer from \eqref{eitH}.

\begin{corollary}\label{C:HOim}
Given $u_0\in \So$, for $t\in J^+$ one has
\begin{equation}\label{eitHg}
e^{itH} u_0(x) =  \frac{(4\pi
)^{-\frac{d}{2}} e^{-\frac{|x|^2}4}}{e^{\frac{i\pi d}{4}} (\sin t)^{\frac d2}} \int_{\Rm} e^{i \frac{|e^{it/2} y-e^{-it/2} x|^2}{4 \sin t}} e^{\frac{|y|^2}4} u_0(y) dy.
\end{equation}
\end{corollary}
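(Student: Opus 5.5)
Corollary \ref{C:HOim} follows by substituting the kernel formula \eqref{OUim} into the conjugation identity \eqref{eitH}. First I will check that the right-hand side of \eqref{eitH} makes sense. Since $u_0\in\So$, the function $\vf:=e^{|\cdot|^2/4}u_0$ has $\psi=e^{-|\cdot|^2/4}\vf=u_0\in\mathscr S(\Rm)$, so $\vf\in\mathscr K(\Rm)$, and Proposition \ref{P:OUim} applies to it. For $t\in J^+=(0,\pi)$ we have $k(t)=0$ and $\operatorname{sgn}(\sin t)=1$, so \eqref{OUim} reduces to
\[
\big(e^{it\mathscr L}\vf\big)(x)=\frac{(4\pi)^{-\frac d2}e^{\frac{idt}2}}{e^{\frac{i\pi d}4}(\sin t)^{\frac d2}}\int_{\Rm}\exp\Big\{\frac{i}{4\sin t}\big(e^{it}|y|^2+e^{-it}|x|^2-2\langle x,y\rangle\big)\Big\}\vf(y)\,dy .
\]

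Next I will multiply this by the prefactor $e^{-|x|^2/4-idt/2}$ from \eqref{eitH}. The phase $e^{-idt/2}$ cancels the factor $e^{idt/2}$ in front. The Gaussian $e^{-|x|^2/4}$ stays outside the integral, exactly as in \eqref{eitHg}. Inside the integral I replace $\vf(y)$ by $e^{|y|^2/4}u_0(y)$. The quadratic form in the exponent is rewritten with the identity \eqref{expa}, which was already used in the proof of Proposition \ref{P:semigroup}:
\[
e^{it}|y|^2+e^{-it}|x|^2-2\langle x,y\rangle=|e^{it/2}y-e^{-it/2}x|^2 .
\]
Here $|\cdot|^2$ denotes the bilinear square $\langle z,z\rangle$ of a complex vector $z$, with no complex conjugation. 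With this reading the expansion holds term by term, because $(e^{it/2})^2=e^{it}$ and $e^{it/2}e^{-it/2}=1$. Putting these pieces together gives \eqref{eitHg}.

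The only point that needs care is whether the integral in \eqref{eitHg} is well defined. The integrand contains $e^{|y|^2/4}$, which grows, but this growth is exactly cancelled by the real part of the phase. Indeed,
\[
\Re\Big(\frac{i\,e^{it}}{4\sin t}\Big)|y|^2=-\frac{\sin t}{4\sin t}|y|^2=-\frac{|y|^2}{4}.
\]
After this cancellation the integrand is a purely oscillatory factor times $u_0(y)$, and $u_0\in\mathscr S(\Rm)$, so the integral converges absolutely. This matches the formula in Proposition \ref{P:semigroup}, where the same integral appears as the Fourier transform of a Schwartz function.
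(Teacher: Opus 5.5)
Your proposal is correct and follows the paper's argument exactly: the paper obtains the corollary by inserting the $J^+$ case of \eqref{OUim} (where $k(t)=0$ and $\operatorname{sgn}(\sin t)=1$) into the conjugation identity \eqref{eitH}, using that $e^{|\cdot|^2/4}u_0\in\mathscr K(\Rm)$. Two details you make explicit are left implicit in the paper, and both are right: $|e^{it/2}y-e^{-it/2}x|^2$ must be read as the bilinear square, and the growth of $e^{|y|^2/4}$ is cancelled by the real part of the phase.
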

Although with a different expression, formula \eqref{eitHg} is well-known to workers in harmonic analysis, see for instance \cite[Eq. (4.13), p.85]{Vel1}, or \cite[Eq. (6), p.164]{ST}.
Using \eqref{expa} in \eqref{eitHg}, we thus obtain the following counterpart of Proposition \ref{P:semigroup}.

\begin{corollary}\label{C:semigroup}
Given $u_0\in \mathscr S(\Rm)$, let $u(x,t) = e^{itH}u_0(x)$. Then for every $t\in J^+$ one has
\begin{equation}\label{finalino}
u(x,t) =  \frac{(4\pi
)^{-\frac{d}{2}}}{e^{\frac{i\pi d}{4}} (\sin t)^{\frac d2}} e^{i \frac{\cot t |x|^2}{4}} \mathscr F\left(e^{i \frac{\cot t |\cdot|^2}4} u_0\right)(\frac{x}{4 \pi \sin t}).
\end{equation}
\end{corollary}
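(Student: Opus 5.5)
The plan is to combine the conjugation formula \eqref{eitH} with the representation \eqref{final} of Proposition \ref{P:semigroup}. Fix $u_0\in\mathscr S(\Rm)$ and put $\vf=e^{|\cdot|^2/4}u_0$, which lies in $\mathscr K(\Rm)$. By \eqref{psi} the associated function is then $\psi=e^{-|\cdot|^2/4}\vf=u_0$.

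The first step is to rewrite \eqref{eitH} as
\[
u(x,t)=e^{-i\frac d2 t}\,e^{-\frac{|x|^2}{4}}\big(e^{it\mathscr L}\vf\big)(x).
\]
The second step is to substitute \eqref{final}, with $\psi=u_0$, for the product $e^{-|x|^2/4}(e^{it\mathscr L}\vf)(x)$ when $t\in J^+$. This gives
\[
u(x,t)=e^{-i\frac d2 t}\,\frac{(4\pi)^{-d/2}e^{idt/2}}{e^{i\pi d/4}(\sin t)^{d/2}}\,e^{i\cot t|x|^2/4}\,\mathscr F\!\left(e^{i\cot t|\cdot|^2/4}u_0\right)\!\left(\frac{x}{4\pi\sin t}\right).
\]
In the prefactor, $e^{-idt/2}$ and $e^{idt/2}$ cancel, and what remains is exactly \eqref{finalino}.

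The only point that needs checking is the bookkeeping of the phase factors. One must confirm two things: that the factor $e^{-i\frac d2 t}$ coming from \eqref{eitH} cancels the factor $e^{idt/2}$ in \eqref{final}, and that the Gaussian $e^{-|x|^2/4}$ appearing in Corollary \ref{C:HOim} is exactly the one absorbed on the left of \eqref{final}.

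An alternative route is to apply the expansion \eqref{expa} directly in \eqref{eitHg}. One then changes variables $y=4\pi\sin t\,z$, exactly as in the proof of Proposition \ref{P:semigroup}. The factors $e^{|y|^2/4}$ in the integrand and $e^{-|x|^2/4}$ outside the integral cancel the real parts $-\frac{|y|^2}{4}$ and $+\frac{|x|^2}{4}$ of $\frac{ie^{\pm it}|\cdot|^2}{4\sin t}$. This leaves only the phases $\frac{i\cot t|\cdot|^2}{4}$, and \eqref{finalino} follows. I expect no real obstacle here, since everything needed is already contained in Proposition \ref{P:semigroup}.
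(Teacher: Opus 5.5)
Your proposal is correct and is essentially the paper's argument. The paper obtains \eqref{finalino} by inserting \eqref{expa} into \eqref{eitHg}, which is your alternative route; your main route uses the same ingredients (the conjugation \eqref{eitH} and the Fourier rewriting of \eqref{OUim}) in the opposite order, by invoking \eqref{final} with $\psi=u_0$, and your bookkeeping of the phase $e^{-idt/2}e^{idt/2}=1$ and of the cancelled real Gaussian parts is accurate.
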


\section{Weighted Strichartz estimates}\label{S:strichartz}

The local equivalence between the free Schr\"odinger equation and the isotropic harmonic oscillator goes back to Niederer \cite{Ni1,Ni2}.  In the modern dispersive PDE literature the corresponding change of variables is usually called the \emph{lens transform}.  Since this transformation is also relevant to the interpretation of the weighted Strichartz estimates subsequently obtained for the isotropic Ornstein--Uhlenbeck equation, we give the complete calculation in the normalization used throughout this paper.

We begin with the free Schr\"odinger equation
\begin{equation}\label{freeLens}
 i\p_s\psi+\Delta_y\psi=F(y,s),\qquad (y,s)\in\Rm\times\mathbb R.
\end{equation}
For $|t|<\pi/2$ put
\begin{equation}\label{Niederer}
 s=\tan t,\qquad y=\frac{x}{\cos t},
\end{equation}
and define
\begin{equation}\label{lensOurNorm}
 v(x,t)=(\cos t)^{-d/2}\exp\left(-i\frac{|x|^2}{4}\tan t\right)
 \psi\left(\frac{x}{\cos t},\tan t\right).
\end{equation}
The choice of the factor $(\cos t)^{-d/2}$ and the quadratic phase is the one for which the $L^2$ norm is preserved by the spatial change of variables.  A direct computation gives the precise conjugation formula.

Put
\[
 a(t)=\cos t,
 \qquad b(t)=\tan t,
 \qquad \phi(x,t)=\frac{|x|^2}{4}b(t),
\]
so that $b'(t)=a(t)^{-2}$ and
\[
 \partial_t\left(\frac{x}{a(t)}\right)=b(t)\frac{x}{a(t)}.
\]
Moreover,
\[
 \nabla_x=a(t)^{-1}\nabla_y,
 \qquad
 \Delta_x=a(t)^{-2}\Delta_y,
 \qquad
 \nabla_x\phi=\frac{b(t)}2x,
 \qquad
 \Delta_x\phi=\frac d2b(t).
\]
Differentiating \eqref{lensOurNorm} and collecting the transport, amplitude and phase terms yields the exact operator identity
\begin{equation}\label{lensOperatorIdentity}
 \left(i\p_t+\Delta_x-\frac{|x|^2}{4}\right)v(x,t)
 =a(t)^{-d/2-2}e^{-i\phi(x,t)}
 \left(i\p_s+\Delta_y\right)\psi(y,s),
\end{equation}
where $s=\tan t$ and $y=x/a(t)$.  In particular, when $F=0$, the Niederer transform sends the free Schr\"odinger equation to
\begin{equation}\label{HOLens}
 i\p_t v+\Delta_xv-\frac{|x|^2}{4}v=0,
\end{equation}
which is the harmonic-oscillator equation in the normalization \eqref{H}.

The mixed norms are transformed just as explicitly.  Let $(q,r)$ be Schr\"odinger-admissible,
\begin{equation}\label{admissible}
 \frac2q+\frac dr=\frac d2,
 \qquad 2\le q,r\le\infty,\quad (q,r,d)\ne(2,\infty,2).
\end{equation}
From \eqref{lensOurNorm} and the change of variables $y=x/a(t)$,
\begin{equation}\label{spatialLensNorm}
 \|v(t)\|_{L^r_x}
 =a(t)^{d(1/r-1/2)}\|\psi(\tan t)\|_{L^r_y}.
\end{equation}
Since $ds=a(t)^{-2}\,dt$, we obtain
\begin{align}
 \|v\|_{L^q_t(( -\pi/2,\pi/2);L^r_x)}^q
 &=\int_{\mathbb R}a(t(s))^{qd(1/r-1/2)+2}
   \|\psi(s)\|_{L^r_y}^q\,ds \notag\\
 &=\|\psi\|_{L^q_s(\mathbb R;L^r_y)}^q,
 \label{lensnorm}
\end{align}
because \eqref{admissible} is equivalent to
$qd(1/r-1/2)+2=0$ when $q<\infty$; the case $(q,r)=(\infty,2)$ follows directly from \eqref{spatialLensNorm}.  Thus the admissible mixed norm is preserved exactly, not merely up to an inequality.

The same calculation applies to the inhomogeneous term.  If
\[
 i\p_s\psi+\Delta_y\psi=F,
\]
then \eqref{lensOperatorIdentity} gives
\[
 G(x,t)=a(t)^{-d/2-2}e^{-i\phi(x,t)}F(y,s)
\]
in the harmonic-oscillator equation.  Let $(\widetilde q,\widetilde r)$ be another admissible pair and write $\widetilde r'$ for its dual exponent.  Since
\[
 \|G(t)\|_{L^{\widetilde r'}_x}
 =a(t)^{d(1/\widetilde r'-1/2)-2}
   \|F(s)\|_{L^{\widetilde r'}_y},
\]
using $dt=a(t)^2ds$ and
\[
 d\left(\frac1{\widetilde r'}-\frac12\right)=\frac2{\widetilde q},
\]
we find
\begin{equation}\label{forcingLensNorm}
 \|G\|_{L^{\widetilde q'}_tL^{\widetilde r'}_x}
 =\|F\|_{L^{\widetilde q'}_sL^{\widetilde r'}_y}.
\end{equation}
Consequently the standard homogeneous and retarded inhomogeneous free Strichartz estimates are transported exactly to the harmonic oscillator on the interval $(-\pi/2,\pi/2)$.  In particular, the mixed-norm estimates of Ginibre--Velo \cite{GV}, together with the endpoint theorem of Keel--Tao \cite{KT}, give
\begin{equation}\label{HOstrichartz}
 \|v\|_{L^q_tL^r_x}
 \le C\left(\|v(0)\|_{L^2_x}
 +\|G\|_{L^{\widetilde q'}_tL^{\widetilde r'}_x}\right).
\end{equation}

We now return to the Ornstein--Uhlenbeck operator.  Set
\begin{equation}\label{omegaOU}
 \omega(x)=e^{-|x|^2/4},
 \qquad
 d\gamma(x)=(2\pi)^{-d/2}\omega(x)^2\,dx.
\end{equation}
By Proposition \ref{P:connect}, the transformation
\begin{equation}\label{OUtoHOlens}
 v(x,t)=e^{-idt/2}\omega(x)\left(e^{it\mathscr L}\vf\right)(x)
\end{equation}
identifies the Ornstein--Uhlenbeck Schr\"odinger equation \eqref{cp0} with the harmonic-oscillator equation \eqref{HOLens}.  At the level of operators this is the identity
\begin{equation}\label{OUHOoperator}
 \omega\mathscr L\omega^{-1}
 =\Delta-\frac{|x|^2}{4}+\frac d2.
\end{equation}
In particular, multiplication by $(2\pi)^{-d/4}\omega$ is the unitary identification of $L^2(d\gamma)$ with $L^2(dx)$.

The weighted spatial norms that arise in the Gaussian formulation are therefore forced by this conjugation.  For $1\le r<\infty$,
\begin{align}
 \|f\|_{L^r_\gamma(\omega^{r-2})}^r
 &: =\int_{\mathbb R^d}|f(x)|^r\omega(x)^{r-2}\,d\gamma(x)\notag\\
 &=(2\pi)^{-d/2}\int_{\mathbb R^d}|\omega(x)f(x)|^r\,dx,
 \end{align}
and hence
\begin{equation}\label{weightedidentity}
 \|f\|_{L^r_\gamma(\omega^{r-2})}
 =(2\pi)^{-d/(2r)}\|\omega f\|_{L^r(dx)}.
\end{equation}
For $r=\infty$ we use the natural endpoint convention
\begin{equation}\label{weightedidentityInfinity}
 \|f\|_{L^\infty_{\gamma,\omega}}:=\|\omega f\|_{L^\infty(dx)}.
\end{equation}
Thus the one-dimensional admissible endpoint $(q,r)=(4,\infty)$ is covered by the same conjugation argument.  For finite $r$, the same identity with $r$ replaced by $r'$ gives the corresponding input space.  More precisely, with respect to the Gaussian pairing one has
\begin{equation}\label{weighteddual}
\bigl(L^r_\gamma(\omega^{r-2})\bigr)^*\simeq L^{r'}_\gamma(\omega^{r'-2}).
\end{equation}
To verify \eqref{weighteddual}, one should keep track of the powers appearing in H\"older's inequality rather than multiply the two weights directly.  Indeed,
\[
 \frac{r-2}{r}+\frac{r'-2}{r'}=0,
\]
and consequently
\[
 \omega^{(r-2)/r}\,\omega^{(r'-2)/r'}=1.
\]
Thus, for $f\in L^r_\gamma(\omega^{r-2})$ and $g\in L^{r'}_\gamma(\omega^{r'-2})$,
\[
 \int_{\mathbb R^d}|fg|\,d\gamma
 \leq
 \|f\|_{L^r_\gamma(\omega^{r-2})}
 \|g\|_{L^{r'}_\gamma(\omega^{r'-2})}.
\]
This is the same weighted-interpolation mechanism that appears, in a different geometric setting, in Proposition 3.5 of the authors' paper with Staffilani; there Stein's interpolation theorem produces simultaneously the output and input weights before they simplify in the relevant parameter range.  We emphasize this point because the dual weighted spaces should be computed from the weighted pairing, not guessed formally.

Combining \eqref{lensnorm}, \eqref{OUtoHOlens}, and \eqref{weightedidentity} with the classical free Strichartz estimates yields the following theorem.

\begin{theorem}\label{T:OUstrichartztransport}
Let $(q,r)$ and $(\widetilde q,\widetilde r)$ be Schr\"odinger-admissible pairs satisfying \eqref{admissible}.  When $r=\infty$, the notation $L^r_\gamma(\omega^{r-2})$ in \eqref{OUstrichartztransport} is understood as $L^\infty_{\gamma,\omega}$ according to \eqref{weightedidentityInfinity}.  Then, for the Ornstein--Uhlenbeck evolution \eqref{cp0} on $(-\pi/2,\pi/2)$,
\begin{equation}\label{OUstrichartztransport}
 \|e^{it\mathscr L}\vf\|_{L^q_tL^r_\gamma(\omega^{r-2})}
 \le C_{d,q,r}\|\vf\|_{L^2_\gamma},
\end{equation}
and the corresponding inhomogeneous estimate is
\begin{equation}\label{OUstrichartzInhomTransport}
 \left\|\int_0^t e^{i(t-\tau)\mathscr L}F(\tau)\,d\tau\right\|_{L^q_tL^r_\gamma(\omega^{r-2})}
 \le C\|F\|_{L^{\widetilde q'}_tL^{\widetilde r'}_\gamma(\omega^{\widetilde r'-2})},
\end{equation}
with all time integrals restricted to $(-\pi/2,\pi/2)$ and with the usual retarded interpretation of the Duhamel term.
\end{theorem}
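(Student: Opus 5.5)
The plan is to transport everything to the free equation and invoke Ginibre--Velo and Keel--Tao. Given $\vf\in\mathscr K(\Rm)$, set $v(x,t)=e^{-idt/2}\omega(x)(e^{it\mathscr L}\vf)(x)$ as in \eqref{OUtoHOlens}. By Proposition \ref{P:connect} (equivalently \eqref{OUHOoperator}), $v$ solves the harmonic-oscillator equation \eqref{HOLens} with $v(0)=\omega\vf$. Next invert the Niederer transform \eqref{lensOurNorm}:
\[
\psi(y,s):=(1+s^2)^{-d/4}e^{i\frac{|y|^2}{4}\frac{s}{\sqrt{1+s^2}}\cdot\frac{1}{\sqrt{1+s^2}}\cdot\sqrt{1+s^2}}\,v\bigl(y/\sqrt{1+s^2},\arctan s\bigr).
\]
More simply, $\psi$ is defined by solving \eqref{lensOurNorm} for $\psi$. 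By \eqref{lensOperatorIdentity}, $\psi$ solves the free equation on $\R$ with $\psi(0)=v(0)$, so $\psi(s)=e^{is\Delta}(\omega\vf)$. The free Strichartz estimate gives
\[
\|\psi\|_{L^q_sL^r_y}\le C\|\omega\vf\|_{L^2}.
\]
The exact identity \eqref{lensnorm} converts this into $\|v\|_{L^q_t((-\pi/2,\pi/2);L^r_x)}\le C\|\omega\vf\|_{L^2}$. Since $|v|=\omega|e^{it\mathscr L}\vf|$, \eqref{weightedidentity} gives
\[
\|e^{it\mathscr L}\vf(t)\|_{L^r_\gamma(\omega^{r-2})}=(2\pi)^{-d/(2r)}\|v(t)\|_{L^r},
\]
and \eqref{phipsi} gives $\|\omega\vf\|_{L^2}=(2\pi)^{d/4}\|\vf\|_{L^2_\gamma}$. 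This proves \eqref{OUstrichartztransport} on $\mathscr K(\Rm)$. The case $r=\infty$ is the same, using \eqref{weightedidentityInfinity}. The general case follows by density and unitarity: Fatou's lemma applied along an a.e.-convergent subsequence in $L^2_\gamma$ (convergence in $C_tL^2_\gamma$ yields, after passing to subsequences, pointwise a.e. convergence in $(x,t)$).

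For the inhomogeneous estimate, take $F$ smooth with $\omega F(t)\in\mathscr S$, and let $w(t)=\int_0^te^{i(t-\tau)\mathscr L}F(\tau)\,d\tau$. It solves $i\partial_tw+\mathscr Lw=iF$, with $w(0)=0$. Conjugating, $V:=e^{-idt/2}\omega w$ solves
\[
i\partial_tV+\Delta V-\tfrac{|x|^2}{4}V=G,\qquad G:=ie^{-idt/2}\omega F,
\]
by \eqref{OUHOoperator}; the $\tfrac d2$ shift is absorbed by the phase exactly as in Proposition \ref{P:connect}. Define $\Psi$ from $V$ by inverting \eqref{lensOurNorm}. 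Then \eqref{lensOperatorIdentity} shows that $\Psi$ solves $i\partial_s\Psi+\Delta\Psi=\widetilde F$ with $\Psi(0)=0$, where $G=a^{-d/2-2}e^{-i\phi}\widetilde F$. Since $t\mapsto\tan t$ is increasing and $t=0\leftrightarrow s=0$, $\Psi$ is exactly the retarded free Duhamel integral of $\widetilde F$ (by uniqueness for the free inhomogeneous problem). The retarded Strichartz estimate of Ginibre--Velo and Keel--Tao gives
\[
\|\Psi\|_{L^q_sL^r_y}\le C\|\widetilde F\|_{L^{\widetilde q'}_sL^{\widetilde r'}_y}.
\]
Then \eqref{lensnorm} and \eqref{forcingLensNorm} turn both sides into $\|V\|_{L^q_tL^r_x}$ and $\|G\|_{L^{\widetilde q'}_tL^{\widetilde r'}_x}$. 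Finally \eqref{weightedidentity}, applied with $r$ and with $\widetilde r'$, converts these into the Gaussian weighted norms, since $|G|=\omega|F|$. The density extension is as before, using the dual weighted spaces \eqref{weighteddual}.

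The main obstacle is bookkeeping rather than analysis. One must check that the lens identity \eqref{lensOperatorIdentity} really maps the retarded solution to the retarded solution with zero data, and that the weight exponents in \eqref{forcingLensNorm} cancel. That cancellation needs
\[
d\Bigl(\frac1{\widetilde r'}-\frac12\Bigr)=\frac2{\widetilde q},
\]
which follows from admissibility of $(\widetilde q,\widetilde r)$, together with $dt=a^2ds$. The exponent of $a$ in $\|G\|^{\widetilde q'}$ is
\[
\widetilde q'\Bigl(\frac2{\widetilde q}-2\Bigr)+2=-\frac{2\widetilde q'}{\widetilde q'}+2=0.
\]
I would verify this explicitly; everything else is the direct transport already set up in this section.
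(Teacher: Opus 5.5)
Your proposal is correct and follows the same route as the paper: Gaussian conjugation \eqref{OUtoHOlens}, the exact lens identities \eqref{lensOperatorIdentity}, \eqref{lensnorm} and \eqref{forcingLensNorm}, the weighted identity \eqref{weightedidentity}, and then the Ginibre--Velo/Keel--Tao estimates, with your density and retarded-Duhamel bookkeeping spelling out what the paper leaves implicit. One cosmetic slip: your first explicit formula for the inverse lens transform has the wrong phase (it should be $e^{\frac{is|y|^2}{4(1+s^2)}}$), but this does not affect the argument, since you then define $\psi$ implicitly by inverting \eqref{lensOurNorm}.
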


It is useful to compare the preceding calculation with the weighted interpolation argument used in Proposition 3.5 of \cite{GSBessel}.  There one interpolates between an $L^1$ weighted estimate and the $L^2$ unitarity estimate.  Stein's theorem produces an output weight and an input weight simultaneously; see, in particular, equations (3.9)--(3.17) there.  In the present Gaussian setting the same bookkeeping is encoded by the conjugation $f\mapsto\omega f$: the output space is $L^r_\gamma(\omega^{r-2})$, while its Gaussian dual is $L^{r'}_\gamma(\omega^{r'-2})$.  Thus the appearance of two apparently different weighted measures in a Strichartz/Duhamel formulation is not an additional dispersive phenomenon, but a consequence of weighted duality.

The normalization used in \cite{DDLS} is obtained from ours by the elementary dilation $y=\sqrt2\,x$.  Their operator
\[
 \mathbf L=-\frac12\Delta+x\cdot\nabla
\]
becomes $-\mathscr L_y$, their Gaussian measure $\pi^{-d/2}e^{-|x|^2}dx$ becomes the Gaussian measure in \eqref{omegaOU}, and their weight $e^{-|x|^2/2}$ becomes $\omega(y)$.  Thus their weighted linear Strichartz estimate is exactly the estimate transported above, after this harmless change of normalization.  More precisely, the diagram
\[
\text{free Schr\"odinger}
\mathrel{\mathop{\longleftrightarrow}\limits^{\text{Niederer}}}
\text{harmonic oscillator}
\mathrel{\mathop{\longleftrightarrow}\limits^{\omega\text{-conjugation}}}
\text{Ornstein--Uhlenbeck}
\]
identifies not only the solution operators but also the admissible mixed norms and their dual forcing norms.  In this precise sense the weighted isotropic Ornstein--Uhlenbeck Strichartz inequalities are the classical free inequalities in a different representation.  The Gaussian factor in $L^r_\gamma(\omega^{r-2})$ is forced by the identity $d\gamma=(2\pi)^{-d/2}\omega^2dx$ and by multiplication by $\omega$ in the OU--harmonic-oscillator conjugation; it does not signal a different dispersive scaling or a new admissibility relation.  This is the Gaussian analogue of the exponential weights arising from the conformal change of variables in \cite[Section~2]{BGstrich}.

The proof in \cite{DDLS} proceeds directly from the Mehler kernel and a $TT^*$ argument.  The point of Theorem \ref{T:OUstrichartztransport} is different: it identifies the underlying inequalities themselves, independently of the proof used to establish them.  We emphasize also that this proposition is not used as an input to the dispersive and uncertainty-principle results of the present paper; its purpose is to delineate precisely which part of the isotropic linear theory is inherited from the classical free Schr\"odinger theory.

\section{Dynamic restriction and the hidden paraboloid}\label{S:restriction}

We next record the restriction-theoretic counterpart of the preceding Strichartz correspondence.  The point is that, in the isotropic model, the frequency-space manifold is not guessed independently: it is inherited from the paraboloid for the classical free Schr\"odinger equation through the exact Niederer and Gaussian conjugations.

We use the term \emph{dynamic restriction} in a spacetime sense.  This
should be distinguished from the recent dynamical restriction principle
of Nicola \cite{NicolaRestriction}, where time remains a parameter and
restriction is imposed, at each fixed time, on curved spatial
submanifolds.  Here the input is a spacetime function $F(x,t)$: after
the transformation $F\mapsto\mathcal C F$, the Fourier transform is
taken in all $d+1$ spacetime variables and restricted to the
Schr\"odinger paraboloid $\Sigma_0\subset\R^{d+1}$.  On the
Ornstein--Uhlenbeck side this produces the time-dependent Gaussian
spacetime measure \eqref{dynamicmeasure}.

Throughout this section assume $d\ge2$ and set
\begin{equation}
 p_{\mathrm{res}}=\frac{2(d+1)}{d+3},
 \qquad
 p_{\mathrm{res}}'=\frac{2(d+1)}{d-1},
 \label{restrictionexponents}
\end{equation}
and let
\begin{equation}
 d\nu(x,t)
 :=(\cos t)^{\frac{2}{d-1}}
 \omega(x)^{\frac{4}{d-1}}\,d\gamma(x)\,dt,
 \qquad |t|<\frac{\pi}{2}.
 \label{dynamicmeasure}
\end{equation}
The power of $\cos t$ is positive: it is exactly the Jacobian weight which makes the $L^{p_{\rm res}'}$ norm invariant under the inverse lens transformation.  Equivalently,
\begin{equation}
 d\nu(x,t)
 =(2\pi)^{-d/2}(\cos t)^{\frac{2}{d-1}}
 \exp\!\left(-\frac{d+1}{2(d-1)}|x|^2\right)dx\,dt.
 \label{dynamicmeasureLeb}
\end{equation}

We use the Fourier transform in spacetime with the normalization already fixed in \eqref{ft}:
\[
\widehat G(\eta,\tau)
 =\int_{\mathbb R^{d+1}}
 e^{-2\pi i(\langle y,\eta\rangle+s\tau)}G(y,s)\,dy\,ds.
\]
The free extension operator associated with the paraboloid is
\begin{equation}
 (E g)(y,s)
 :=\int_{\mathbb R^d}
 e^{2\pi i(\langle y,\xi\rangle-2\pi s|\xi|^2)}g(\xi)\,d\xi.
 \label{freeextension}
\end{equation}
Indeed,
\begin{equation}
 E^*G(\xi)
 =\widehat G\bigl(\xi,-2\pi|\xi|^2\bigr),
 \label{freeRestriction}
\end{equation}
so that the restriction manifold is the paraboloid
\[
 \Sigma_0=\{(\xi,-2\pi|\xi|^2):\xi\in\mathbb R^d\}.
\]

Let $a(t)=\cos t$ and $s=\tan t$, $y=x/a(t)$.  Starting with the free extension $Eg$ and applying first the Niederer transformation and then the Gaussian conjugation gives the Ornstein--Uhlenbeck extension operator
\begin{equation}
 (E_{\rm OU}g)(x,t)
 :=e^{idt/2}\omega(x)^{-1}a(t)^{-d/2}
 e^{-i|x|^2\tan t/4}
 (Eg)\!\left(\frac{x}{a(t)},\tan t\right).
 \label{OUextension}
\end{equation}
The critical exponent $p_{\rm res}'$ satisfies
\[
 d+2-\frac{d p_{\rm res}'}2=-\frac{2}{d-1},
\]
and therefore the exact change of variables gives
\begin{equation}
 \|E_{\rm OU}g\|_{L^{p_{\rm res}'}(d\nu)}
 =C_d\|Eg\|_{L^{p_{\rm res}'}(\mathbb R^{d+1})},
 \label{OUextensionnorm}
\end{equation}
with a fixed dimensional normalization constant stemming only from $d\gamma$.

The adjoint of \eqref{OUextension} can be written explicitly.  We use the sesquilinear pairing $\langle f,g\rangle=\int f\,\overline{g}$ and define, for a spacetime function $F$ on $\mathbb R^d\times(-\pi/2,\pi/2)$,
\begin{equation}
\begin{aligned}
 (\mathcal C F)(y,s)
 :={}&(1+s^2)^{-\frac d4-1-\frac1{d-1}}
 e^{-\frac{id}{2}\arctan s}
 e^{\frac{i s|y|^2}{4(1+s^2)}}
 \\
 &\times
 e^{-\frac{d+3}{4(d-1)}\frac{|y|^2}{1+s^2}}
 F\!\left(\frac{y}{\sqrt{1+s^2}},\arctan s\right).
\end{aligned}
\label{OUrestrictiontransform}
\end{equation}
Then a direct change of variables in the adjoint pairing yields
\begin{equation}
 (E_{\rm OU}^*F)(\xi)
 =(2\pi)^{-d/2}
 \widehat{\mathcal C F}\bigl(\xi,-2\pi|\xi|^2\bigr).
 \label{OUrestrictionoperator}
\end{equation}

The following theorem gives the precise form of Theorem A.

\begin{theorem}[Dynamic restriction for the isotropic Ornstein--Uhlenbeck flow]
\label{T:OUrestriction}
For every $F\in C_0^\infty(\mathbb R^d\times(-\pi/2,\pi/2))$,
\begin{equation}
 \left\|
 (2\pi)^{-d/2}
 \widehat{\mathcal C F}\bigl(\xi,-2\pi|\xi|^2\bigr)
 \right\|_{L^2(\mathbb R^d)}
 \le C_d
 \|F\|_{L^{p_{\rm res}}(d\nu)}.
 \label{OUrestriction}
\end{equation}
Equivalently, the restriction operator for the isotropic Ornstein--Uhlenbeck Schr\"odinger flow is the adjoint of the transported extension operator \eqref{OUextension}, and the restriction manifold is precisely the classical paraboloid written in the free variables.
\end{theorem}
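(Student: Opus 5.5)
The plan is to reduce the statement to the classical Tomas--Stein restriction theorem for the paraboloid by duality, using the exact norm identity \eqref{OUextensionnorm} together with the adjoint formula \eqref{OUrestrictionoperator}. In free variables, Tomas--Stein (in the form of the Strichartz estimate at the diagonal exponent $q=r=p_{\rm res}'$, which is admissible since $\frac{2}{p_{\rm res}'}+\frac{d}{p_{\rm res}'}=\frac{d+2}{p_{\rm res}'}\cdot\frac{1}{1}$ and $p_{\rm res}'=\frac{2(d+2)}{d}$ is the Strichartz exponent, or directly in the restriction form) gives
\[
\|Eg\|_{L^{p_{\rm res}'}(\R^{d+1})}\le C_d\|g\|_{L^2(\R^d)},
\]
valid for $d\ge2$ (here $\frac{2(d+1)}{d-1}\ge\frac{2(d+2)}{d}$, so the restriction-exponent estimate is available from Tomas--Stein for the paraboloid). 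Combined with \eqref{OUextensionnorm}, this yields $\|E_{\rm OU}g\|_{L^{p_{\rm res}'}(d\nu)}\le C_d\|g\|_{L^2}$.

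The first step is to verify \eqref{OUextensionnorm} carefully. Writing $d\nu$ in Lebesgue form \eqref{dynamicmeasureLeb}, $|E_{\rm OU}g|^{p'}=\omega^{-p'}a^{-dp'/2}|Eg(x/a,\tan t)|^{p'}$, and the Gaussian factors combine as $\omega^{-p'}\omega^{2+4/(d-1)}$. Since $p'=2+\frac{4}{d-1}$, this product is $1$. The remaining factor is $a^{-dp'/2+2/(d-1)}$, and the substitution $x=ay$, $dt=a^2ds$ contributes $a^{d+2}$. The total exponent is $d+2-\frac{dp'}{2}+\frac{2}{d-1}=0$ by the displayed identity. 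So the norm is preserved up to $(2\pi)^{-d/2}$, exactly as in \eqref{lensnorm}.

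The second step is duality. For $F\in C_0^\infty$, I would take the adjoint of $E_{\rm OU}$ with respect to the pairing defining $L^2(d\nu)$-type duality. The subtle point, and the main obstacle, is matching the pairing used in \eqref{OUrestrictionoperator} with the measure $d\nu$, so that the dual of $L^{p'}(d\nu)$ is $L^{p}(d\nu)$. Concretely, I would define the adjoint against $d\nu$, namely $\int E_{\rm OU}g\,\overline F\,d\nu=\int g\,\overline{E_{\rm OU}^*F}\,d\xi$. I would then check, by inserting \eqref{freeextension} and changing variables $x=y/\sqrt{1+s^2}$, $t=\arctan s$, that the resulting $y,s$-integrand is precisely $\mathcal CF$ from \eqref{OUrestrictiontransform}. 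The ingredients are:
\begin{itemize}
\item[(i)] the Jacobian $(1+s^2)^{-d/2-1}$;
\item[(ii)] $a^{-d/2}$ with $a=(1+s^2)^{-1/2}$;
\item[(iii)] the factor $(\cos t)^{2/(d-1)}$ from $d\nu$;
\item[(iv)] the Gaussian $\omega^{-1}\omega^{2+4/(d-1)}=e^{-\frac{d+3}{4(d-1)}|x|^2}$;
\item[(v)] the conjugated phases $e^{-idt/2}e^{i|x|^2\tan t/4}$.
\end{itemize}
I expect the powers to collect to $-\frac d4-1-\frac1{d-1}$. This bookkeeping is the only place errors can enter, so I would do it exponent by exponent.

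Then Hölder in $L^{p}(d\nu)$ gives
\[
|\langle g,E_{\rm OU}^*F\rangle|\le\|E_{\rm OU}g\|_{L^{p'}(d\nu)}\|F\|_{L^{p}(d\nu)}\le C_d\|g\|_{L^2}\|F\|_{L^p(d\nu)}.
\]
Taking the supremum over $\|g\|_{L^2}\le1$ yields \eqref{OUrestriction}. The equivalent formulation is just \eqref{OUrestrictionoperator}.
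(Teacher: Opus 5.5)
You follow the paper's proof step for step: the extension bound \eqref{TomasSteinextension}, the exact identity \eqref{OUextensionnorm}, and duality with respect to $\int E_{\rm OU}g\,\overline F\,d\nu$. That pairing is indeed the one for which \eqref{OUrestrictionoperator} holds, and your exponent bookkeeping for \eqref{OUextensionnorm} and for $\mathcal CF$ is correct.

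The gap is the first step, which is also the first step of the paper's proof. The bound $\|Eg\|_{L^{p'_{\rm res}}(\mathbb R^{d+1})}\le C_d\|g\|_{L^2(\mathbb R^d)}$ is false, because $E$ integrates over the whole, non-compact paraboloid. With $g_\lambda=g(\cdot/\lambda)$ one has $Eg_\lambda(y,s)=\lambda^d\,Eg(\lambda y,\lambda^2 s)$, hence
\[
\frac{\|Eg_\lambda\|_{L^q(\mathbb R^{d+1})}}{\|g_\lambda\|_{L^2(\mathbb R^d)}}
=\lambda^{\frac d2-\frac{d+2}{q}}\,
\frac{\|Eg\|_{L^q(\mathbb R^{d+1})}}{\|g\|_{L^2(\mathbb R^d)}}.
\]
So a global $L^2\to L^q$ bound can hold only at the Strichartz exponent $q=\frac{2(d+2)}{d}$. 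At $q=p'_{\rm res}=\frac{2(d+1)}{d-1}$ the power of $\lambda$ is $\frac1{d+1}>0$. For a Gaussian $g$ one has $0<\|Eg\|_{L^q}<\infty$, since $q>\frac{2(d+1)}{d}$. Hence the ratio blows up as $\lambda\to\infty$.

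Your justification conflates the two exponents. First, $p'_{\rm res}$ is not $\frac{2(d+2)}{d}$; it is the Tomas--Stein exponent for a curved hypersurface in $\mathbb R^d$, not in $\mathbb R^{d+1}$. Second, the range $q\ge\frac{2(d+2)}{d}$ you invoke is available only for a compact piece of the paraboloid. That means $g$ supported in a fixed ball $B_R$, with a constant growing like $R^{1/(d+1)}$.

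Your other two steps are exact equivalences: the change of variables is an identity, and $C_0^\infty$ is dense in $L^{p_{\rm res}}(d\nu)$. So this cannot be repaired by a better argument. Inequality \eqref{OUrestriction}, with a constant depending only on $d$, would imply via \eqref{OUextensionnorm} the global extension bound just refuted, so it is false as stated.

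The same three-step argument does prove either of two corrected statements:
\begin{enumerate}
\item[(a)] The frequency-localized estimate $\|\chi_{\{|\xi|\le R\}}E_{\rm OU}^*F\|_{L^2}\le C_{d,R}\|F\|_{L^{p_{\rm res}}(d\nu)}$.
\item[(b)] The scale-invariant version at $q=\frac{2(d+2)}{d}$. There $d+2-\frac{dq}{2}=0$, so the factor $(\cos t)^{\frac{2}{d-1}}$ disappears and the measure becomes $\omega^{4/d}\,d\gamma\,dt$. In $\mathcal C$, the power $(1+s^2)^{-\frac1{d-1}}$ drops out and $\frac{d+3}{4(d-1)}$ is replaced by $\frac{d+4}{4d}$. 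The resulting restriction estimate from $L^{\frac{2(d+2)}{d+4}}(\omega^{4/d}\,d\gamma\,dt)$ is just the dual of the diagonal case of Theorem \ref{T:OUstrichartztransport}.
\end{enumerate}
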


\begin{proof}
By the classical restriction theorem for the paraboloid, see \cite{Tomas,Stri}, or equivalently its dual extension estimate,
\begin{equation}
 \|Eg\|_{L^{p_{\rm res}'}(\mathbb R^{d+1})}
 \le C_d\|g\|_{L^2(\mathbb R^d)}.
 \label{TomasSteinextension}
\end{equation}
Equation \eqref{OUextensionnorm} transfers \eqref{TomasSteinextension} to the Ornstein--Uhlenbeck extension operator.  Taking the Banach adjoint and using \eqref{OUrestrictionoperator} gives \eqref{OUrestriction}.  Thus the inequality is exactly the classical Tomas--Stein theorem expressed in the Ornstein--Uhlenbeck variables.
\end{proof}

\begin{remark}\label{R:OUrestriction}
The spacetime measure \eqref{dynamicmeasure} is not the invariant Gaussian measure $d\gamma\,dt$.  Its time factor and Gaussian spatial factor are forced by the lens Jacobian and by the Gaussian conjugation.  The theorem is therefore the classical Tomas--Stein theorem expressed through the full free--harmonic--Ornstein--Uhlenbeck correspondence.  What is specific to the present formulation is the resulting spacetime restriction operator in Gaussian variables: its adjoint is the ordinary $(d+1)$-dimensional Fourier transform of $\mathcal C F$ restricted to the continuous Schr\"odinger paraboloid.
\end{remark}

\section{Critical nonlinear dynamics}\label{S:nonlinear}

We next turn to the nonlinear consequences of the full correspondence. The critical variational structure identifies the canonical weighted nonlinearity, and the well-posedness theory is then transported from the classical mass-critical equation.

\subsection{The critical variational structure}\label{S:EL}
The preceding equivalence extends naturally to the variational problem associated with the sharp Strichartz constant.  We record this observation since it further clarifies that the free, harmonic-oscillator and isotropic Ornstein--Uhlenbeck formulations are different representations of the same dispersive structure.

Let $I=(-\pi/2,\pi/2)$ and let $(q,r)$ be an admissible pair.  For $f\neq0$ set
\begin{equation}\label{OUsharpfunctional}
 \mathcal S_{q,r}(\vf)
 :=\frac{\|e^{it\mathscr L}\vf\|_{L^q_t(I;L^r_\gamma(\omega^{r-2}))}}
 {\|\vf\|_{L^2_\gamma}}.
\end{equation}
Assume that the optimal constant in \eqref{OUstrichartztransport} is attained at a sufficiently regular nonzero function $\vf$, and write
\[
 u(t)=e^{it\mathscr L}\vf.
\]
By homogeneity we may impose $\|\vf\|_{L^2_\gamma}=1$ and vary
\[
 \int_I\left(\int_{\mathbb R^d}|u(x,t)|^r\omega(x)^{r-2}\,d\gamma(x)\right)^{q/r}dt.
\]
A standard first-variation argument gives the Euler--Lagrange equation
\begin{equation}\label{OUEL}
 \int_I e^{-it\mathscr L}\left[
 \|u(t)\|_{L^r_\gamma(\omega^{r-2})}^{q-r}
 \omega^{r-2}|u(t)|^{r-2}u(t)
 \right]dt
 =\lambda \vf,
\end{equation}
for a real constant $\lambda>0$.  Formula \eqref{weighteddual} is precisely the weighted duality underlying this variation.

Under the unitary Gaussian conjugation
\[
 v(x,t)=e^{-idt/2}\omega(x)u(x,t),\qquad g=(2\pi)^{-d/4}\omega \vf,
\]
the equation \eqref{OUEL} becomes, up to the harmless normalization constants already displayed in \eqref{weightedidentity}, the Euler--Lagrange equation for the sharp harmonic-oscillator Strichartz functional.  Applying the Niederer transformation then gives the classical free Schr\"odinger extremizer equation.  Thus the variational problems, and not only the linear estimates, are transported by the diagram above.

The diagonal admissible exponent
\begin{equation}\label{criticaldiag}
 q=r=\frac{2(d+2)}{d}
\end{equation}
is particularly transparent.  Since $r-2=4/d$, \eqref{OUEL} reduces to
\begin{equation}\label{OUELcritical}
 \int_I e^{-it\mathscr L}
 \left[\omega^{4/d}|u(t)|^{4/d}u(t)\right]dt
 =\lambda \vf.
\end{equation}
The same Gaussian power is obtained by transporting the standard mass-critical nonlinear Schr\"odinger equation
\begin{equation}\label{freecriticalNLS}
 i\partial_s\psi+\Delta\psi=\mu|\psi|^{4/d}\psi
\end{equation}
through the Niederer transformation and then through \eqref{OUtoHOlens}.  Indeed, the mass-critical exponent is exactly the one for which the time-dependent factor generated by the lens transform cancels, and one obtains
\begin{equation}\label{OUcriticalNLS}
 i\partial_t \vf+\mathscr L \vf
 =\mu\omega^{4/d}|\vf|^{4/d}\vf.
\end{equation}
Consequently, the same critical Gaussian nonlinearity is singled out independently by the transformation of the classical mass-critical NLS and by the Euler--Lagrange equation of the sharp diagonal OU Strichartz functional.

The preceding observation leads to a nonlinear Cauchy problem which is intrinsic to the correspondence developed above.  We emphasize that its Gaussian power is not chosen independently: it is forced by the transport of \eqref{freecriticalNLS}.

\subsection{Well-posedness and global dynamics}\label{S:OUNLS}
Let
\[
 \rho=\frac{2(d+2)}{d}.
\]
For an interval $I\subset\mathbb R$ set
\[
 X(I)=C(I;L^2_\gamma)\cap L^\rho\bigl(I;L^\rho_\gamma(\omega^{\rho-2})\bigr).
\]
Notice that $\rho-2=4/d$.  The weighted duality in \eqref{weighteddual} and the identity \eqref{weightedidentity} give the critical nonlinear estimate
\begin{equation}\label{OUNLestimate}
 \|\omega^{4/d}|u|^{4/d}u\|_{L^{\rho'}_tL^{\rho'}_\gamma(\omega^{\rho'-2})}
 \leq C_d\|u\|_{L^\rho_tL^\rho_\gamma(\omega^{\rho-2})}^{1+4/d}.
\end{equation}
Indeed, after multiplication by $\omega$ this is exactly the ordinary Lebesgue estimate for $|\omega u|^{4/d}\omega u$.  The same observation gives the corresponding Lipschitz estimate for the difference of two nonlinearities.

The local theory is the standard critical Strichartz fixed-point theory transported to the present variables; see, for example, \cite{GV,GVstrich,Caze} for the classical nonlinear Schr\"odinger framework.

Weighted nonlinearities can of course arise for other structural reasons.  For example, in ongoing work with Stewart \cite{GSprep}, an equivariant reduction of the standard planar NLS followed by a conjugation to a Bessel operator produces a power nonlinearity with an explicit radial weight.  In that setting, as in \eqref{OUcriticalNLS}, the weight is forced by the transformation rather than postulated independently.  We mention this only to emphasize the general principle that weighted nonlinearities often encode the geometry of the representation in which the equation is written.

The local assertion is contained in the following result.

\begin{theorem}\label{T:OUNLSlwp}
Let $\mu\in\mathbb R$ and $u_0\in L^2(\Rm,d\gamma)$.  There exists an open interval $I\ni0$ and a unique mild solution
\[
 u\in X(I)
\]
of
\begin{equation}\label{OUNLScp}
 \begin{cases}
 i\partial_tu+\mathscr Lu=\mu\omega^{4/d}|u|^{4/d}u,\\
 u(0)=u_0.
 \end{cases}
\end{equation}
The solution depends continuously on the initial datum.  Moreover, on every Niederer interval
\[
 I_k=\left(k\pi-\frac{\pi}{2},k\pi+\frac{\pi}{2}\right)
\]
the equation \eqref{OUNLScp} is equivalent, by the Gaussian conjugation and the Niederer transformation, to the standard mass-critical NLS \eqref{freecriticalNLS} on the full free time axis.
\end{theorem}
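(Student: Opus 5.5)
The plan is to deduce both assertions of Theorem \ref{T:OUNLSlwp} from the classical mass-critical theory. The equivalence on Niederer intervals comes first, because it then also yields the local theory.

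\emph{Step 1 (exact conjugation of the nonlinear equation).} On $I_0=(-\pi/2,\pi/2)$ set $v(x,t)=e^{-idt/2}\omega(x)u(x,t)$, as in \eqref{OUtoHOlens}. By \eqref{OUHOoperator},
\[
\Bigl(i\partial_t+\Delta-\tfrac{|x|^2}{4}\Bigr)v=e^{-idt/2}\omega\,(i\partial_t+\mathscr L)u .
\]
Since $|v|=\omega|u|$, the right side equals $\mu\,e^{-idt/2}\omega\,\omega^{4/d}|u|^{4/d}u=\mu|v|^{4/d}v$. Hence \eqref{OUNLScp} is equivalent to the harmonic-oscillator NLS with nonlinearity $\mu|v|^{4/d}v$.

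Next I would write $v$ as the Niederer image \eqref{lensOurNorm} of $\psi$ and apply \eqref{lensOperatorIdentity}. This shows that the harmonic-oscillator NLS is equivalent to
\[
(i\partial_s+\Delta_y)\psi=a(t)^{d/2+2}e^{i\phi}\,\mu|v|^{4/d}v .
\]
Using $|v|=a^{-d/2}|\psi|$ and $v=a^{-d/2}e^{-i\phi}\psi$, the right side becomes $\mu a^{d/2+2-d/2-2}|\psi|^{4/d}\psi=\mu|\psi|^{4/d}\psi$; the power of $a$ vanishes exactly at the exponent $1+4/d$. So \eqref{OUNLScp} on $I_0$ is equivalent to \eqref{freecriticalNLS} on $s\in\mathbb R$.

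For a general $I_k$, I would first translate time by $k\pi$. By \eqref{OUperiodicity} and the evenness of $\omega$, the equation commutes with $\mathcal R$, so the time-shifted problem is again of the same form. The Niederer chart centered at $k\pi$ is then treated exactly as $I_0$.

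\emph{Step 2 (mild solutions and norms).} Here I would check that the equivalence respects mild (Duhamel) solutions and the spaces involved, not only classical solutions. On the data side, multiplication by $\omega$ is unitary from $L^2_\gamma$ onto $L^2$ up to the factor $(2\pi)^{-d/4}$. On the solution side, \eqref{weightedidentity} and \eqref{lensnorm} (with $q=r=\rho$) identify $L^\rho(I_0;L^\rho_\gamma(\omega^{\rho-2}))$ with $L^\rho_sL^\rho_y(\mathbb R\times\mathbb R^d)$ isometrically up to constants. Continuity in time into $L^2$ is also preserved, since the lens map is a strongly continuous family of unitaries in $t$ for $|t|<\pi/2$. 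The forcing term is handled by \eqref{forcingLensNorm}, so the Duhamel formulas correspond to each other. Thus $X(J)$ for $J\subset I_0$ corresponds exactly to the classical space $C_sL^2\cap L^\rho_{s,y}$ on $\tan J$.

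\emph{Step 3 (local well-posedness).} Two routes are available. The first is to run the standard contraction argument directly in $X(I)$, using Theorem \ref{T:OUstrichartztransport}, the nonlinear estimate \eqref{OUNLestimate} and its Lipschitz analogue. For $L^2$ data one chooses $I$ small enough that $\|e^{it\mathscr L}u_0\|_{L^\rho(I;L^\rho_\gamma(\omega^{\rho-2}))}$ lies below the smallness threshold, which is possible by dominated convergence. The second route is simply to transport the classical Cazenave--Weissler result via Step 2. Uniqueness in $X(I)$ and continuous dependence are obtained either way.

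\emph{Main obstacle.} The delicate point is Step 2, namely making the Duhamel equivalence rigorous for rough $L^2$ data. I would first verify the correspondence for Schwartz or $\mathscr K$ data, where solutions are smooth and Step 1 applies pointwise. I would then pass to the limit in $X$ using the isometries of Step 2 together with the continuous dependence of the classical theory.
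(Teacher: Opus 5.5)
Your proposal is correct and follows essentially the same route as the paper. The paper's proof likewise obtains the local theory on $I_0$ in one of two ways: from Theorem~\ref{T:OUstrichartztransport}, \eqref{OUNLestimate} and a contraction argument, or by transporting the classical $L^2$ theory. It treats $I_k$ by translating time by $k\pi$, and it rests on the cancellation of the lens factor at the exponent $1+4/d$, which you verify explicitly via \eqref{OUHOoperator} and \eqref{lensOperatorIdentity}. Your appeal to $\mathcal R$ and \eqref{OUperiodicity} for the time shift is unnecessary: \eqref{OUNLScp} is autonomous, so time translation alone preserves its form.
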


\begin{proof}
The local assertion on $I_0=(-\pi/2,\pi/2)$ follows either directly
from Theorem \ref{T:OUstrichartztransport}, \eqref{OUNLestimate},
and the standard contraction argument, or by transporting the
classical $L^2$ local theory for \eqref{freecriticalNLS}; see, for
instance, \cite{Caze}. The same calculation, after translating time
by $k\pi$, gives the local equivalence on each Niederer interval
$I_k$ once the data at one endpoint of the interval are prescribed.
The critical exponent is precisely the one for which the time-dependent
factor generated by the lens transformation cancels.

\end{proof}

 The endpoints $t=k\pi+\pi/2$ of the Niederer intervals are not
caustics of the linear Mehler kernel; they correspond instead to
$s=\pm\infty$ in the free variables. The interpretation of scattering
in terms of convergence at the endpoint of a lens chart is classical;
see, in particular, \cite{Carles,TaoLens}, and also \cite{BGTV} for
an application of the lens transform to scattering in $\Sigma$ for
mass-subcritical nonlinear Schr\"odinger equations. The corresponding
chart transition is governed by scattering for the free mass-critical
NLS. More precisely, if a global free solution $\psi$ has scattering
states $\psi_\pm\in L^2(\Rm)$, then the lens transform has strong
$L^2$ limits as $t\to\pm\pi/2$, and these limits are obtained from
$\psi_\pm$ by the Gaussian-conjugated quarter-period transform.
Thus a boundary datum at one end of a Niederer interval determines the
incoming scattering state for the adjacent interval. In the
defocusing mass-critical problem the wave and scattering operators
are global on $L^2(\Rm)$, so this transition can be iterated across all
chart boundaries.

The global assertion is the following.

\begin{theorem}[Global defocusing well-posedness]\label{T:OUdefocusing}
Assume that $\mu>0$.  Then, for every $u_0\in L^2(d\gamma)$, the Cauchy problem \eqref{OUNLScp} admits a unique global mild solution
\[
 u\in C(\mathbb R;L^2_\gamma)\cap L^\rho_{\rm loc}\bigl(\mathbb R;L^\rho_\gamma(\omega^{4/d})\bigr),
 \qquad \rho=\frac{2(d+2)}d.
\]
For every $k\in\mathbb Z$, its restriction to the Niederer interval
\[
 I_k=\left(k\pi-\frac\pi2,k\pi+\frac\pi2\right)
\]
is obtained, through the Gaussian conjugation and the Niederer transformation, from a global scattering solution of the defocusing mass-critical NLS on $\mathbb R_s$.  In particular, the solution extends uniquely in $L^2(d\gamma)$ through every boundary $t=k\pi+\pi/2$ of the Niederer charts, and the global solution depends continuously on the initial datum.
\end{theorem}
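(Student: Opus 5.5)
The plan is to build the global solution chart by chart, using the correspondence of Theorem \ref{T:OUNLSlwp} on each Niederer interval $I_k$ and using scattering to pass across the chart boundaries $t=k\pi\pm\pi/2$.

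\emph{Step 1: the chart $I_0$.} Given $u_0\in L^2(d\gamma)$, set $g=\omega u_0\in L^2(dx)$. The lens transform at $t=0$ is the identity, so $g$ is the free datum at $s=0$. By Dodson's global well-posedness and scattering theorem for the defocusing mass-critical NLS \eqref{freecriticalNLS} with $\mu>0$, there is a unique global solution $\psi\in C(\mathbb R_s;L^2)\cap L^\rho(\mathbb R_s;L^\rho)$ with scattering states $\psi_\pm$. We then define $u$ on $I_0$ by the inverse Niederer transformation followed by the inverse conjugation \eqref{OUtoHOlens}. Two facts make this work:
\begin{itemize}
\item[(i)] Because $\rho$ is mass-critical, the time factor in \eqref{lensOperatorIdentity} cancels and $u$ is a mild solution of \eqref{OUNLScp} (Theorem \ref{T:OUNLSlwp}).
\item[(ii)] The mixed norm $L^\rho_tL^\rho_x$ is preserved exactly, by \eqref{lensnorm} applied with $q=r=\rho$. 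By \eqref{weightedidentity} this gives $u\in L^\rho(I_0;L^\rho_\gamma(\omega^{4/d}))$ with a \emph{finite global} bound, not merely a local one.
\end{itemize}
Uniqueness in $X(I_0)$ follows by transporting uniqueness for $\psi$, or directly from the contraction argument in Theorem \ref{T:OUNLSlwp}.

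\emph{Step 2: the boundary limits.} We show that $u(t)$ has strong $L^2(d\gamma)$ limits as $t\to\pm\pi/2$. Write $\psi(s)=e^{is\Delta}\psi_\pm+o_{L^2}(1)$ as $s\to\pm\infty$. The lens transform of the linear part $e^{is\Delta}\psi_\pm$ is $e^{-idt/2}\omega e^{it\mathscr L}\phi_\pm$ for a suitable $\phi_\pm$, and this is continuous in $t$ up to $\pm\pi/2$ by unitarity of the group and strong continuity (Proposition \ref{P:OUim}, Corollary \ref{C:quarter}). The lens transform is an $L^2$ isometry at each fixed time, so the $o(1)$ error is preserved. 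Hence $u(\pm\pi/2)$ exists; by \eqref{quarterOU} it is the Gaussian-conjugated quarter-period Fourier image of $\psi_\pm$. Equivalently, Duhamel's formula together with the finite $L^\rho$ norm and the inhomogeneous estimate \eqref{OUstrichartzInhomTransport} gives a Cauchy criterion at the endpoints.

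\emph{Step 3: crossing into $I_1$.} On $I_1$ we apply the translated correspondence, with $t-\pi$ in place of $t$. The datum $u(\pi/2)$ now sits at the left endpoint of $I_1$, which corresponds to free time $s=-\infty$. It therefore prescribes, after Gaussian conjugation and the quarter-period identification, an incoming scattering state. Since the wave operators for defocusing mass-critical NLS are defined on all of $L^2$, this state determines a unique global free solution. Transporting that solution back gives $u$ on $I_1$, continuous at $\pi/2$ by Step 2 applied in reverse. We iterate in both directions of $k$.

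\emph{Step 4: uniqueness across the boundaries.} To glue uniquely, we also need uniqueness across $t=\pi/2$ itself, i.e.\ on a small interval straddling the boundary. The boundary is not a caustic, so this is the local theory of Theorem \ref{T:OUNLSlwp} started at $\pi/2$ (time-translate the equation). This yields $u\in C(\mathbb R;L^2_\gamma)\cap L^\rho_{\rm loc}$.

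\emph{Step 5: continuous dependence.} This follows from continuity of the free solution map, of the scattering map, and of the wave operators in $L^2$. On compact time sets only finitely many charts are involved, so the composition is continuous.

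\emph{Main obstacle.} The hardest step is the chart transition in Steps 2–3. One must verify carefully that the endpoint limit produced by the lens transform on $I_0$ coincides with the incoming asymptotic datum seen by the lens transform on $I_1$, with the correct phases and the reflection $\mathcal R$. I would first check this on the linear flow: there it reduces to the group law $e^{i\pi\mathscr L}=\mathcal R$ together with \eqref{quarterOU}. I would then pass to the nonlinear case using the Duhamel/Strichartz Cauchy estimate near the endpoint.
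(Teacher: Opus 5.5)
Your proposal is correct and follows the same route as the paper. Both transport Dodson's global scattering solution to $I_0$ by the Niederer map and Gaussian conjugation, identify the strong $L^2(d\gamma)$ endpoint limits with the scattering states through the quarter-period transform of Corollary \ref{C:quarter}, and use the global wave operators to continue chart by chart; your Step 4 (restarting the local theory at $t=\pi/2$) and your phase check via $e^{i\pi\mathscr L}=\mathcal R$ only make explicit details that the paper's proof leaves implicit.
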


\begin{proof}
The defocusing mass-critical NLS is globally well-posed and scattering
in $L^2(\Rm)$; see Dodson \cite{Do1,Do2,Do3}. Transport the unique
free solution by the Niederer map and the Gaussian conjugation on
$I_0=(-\pi/2,\pi/2)$. By Theorem \ref{T:OUNLSlwp}, this gives the
unique solution of \eqref{OUNLScp} on $I_0$. As $s\to\pm\infty$,
scattering gives free asymptotic states in $L^2$, and hence strong
$L^2(d\gamma)$ limits of the corresponding OU solution at the two
endpoints $t=\pm\pi/2$. These endpoint values are related to the
free scattering states by the Gaussian-conjugated quarter-period
transform described in Corollary \ref{C:quarter}. Applying the global
free wave operator to the outgoing state gives the unique free
solution on the next chart with the required incoming state.
Iterating this construction over $k\in\mathbb Z$ produces a global
OU solution whose restrictions to the intervals $I_k$ agree at their
common endpoints. Uniqueness and continuous dependence follow from
the corresponding properties on each free chart and from the
continuity of the scattering and wave operators in $L^2$.
\end{proof}

\begin{remark}\label{R:focusing}
For the focusing sign the correspondence remains exact, but a global statement requires the corresponding free mass-critical solution to be global and scattering.  In particular, the familiar scattering/blow-up alternatives for the focusing free equation are transported to \eqref{OUNLScp}.  This nonlinear issue should not be confused with the genuine linear caustics $t=k\pi$, where the nondegenerate Mehler kernel representation breaks down although the linear unitary group itself remains well defined.
\end{remark}

\section{Uncertainty principles}\label{S:uncertainty}

We exploit Proposition \ref{P:semigroup} to prove Proposition \ref{P:main}. We mention that related dynamical forms of Hardy's uncertainty principle for
Schr\"odinger equations with real drift were recently obtained by
Garofalo and Lunardi \cite{GarLun}, in the more general setting of
possibly degenerate operators satisfying the H\"ormander
hypoellipticity condition. Their result is formulated in Lebesgue space and involves the
covariance geometry generated by the drift and diffusion matrices,
whereas the results below concern the unitary isotropic
Ornstein--Uhlenbeck evolution in Gaussian space and exploit its
exact correspondence with the harmonic oscillator.

We will need the following result, see \cite[Theorem 1.1]{CEKPV}. We note for the reader that our normalisation of the Fourier transform
\begin{equation}\label{ft}
\hat \vf(\xi) = \mathscr F(\vf)(\xi) = \int_{\Rm} e^{-2\pi i\sa \xi,x\da} \vf(x) dx,
\end{equation}
differs from theirs, and this accounts for the different constants in \eqref{hardyL2} below.

\begin{theorem}\label{T:har2}
Assume that $h:\Rm\to \mathbb C$ is a measurable function that satisfies 
\begin{equation}\label{hardyL2}
||e^{a|\cdot|^2} h||_{L^2(\Rm)} + ||e^{b|\cdot|^2} \hat h||_{L^2(\Rm)}<\infty.
\end{equation}
If $a b\ge \pi^2$, then $h\equiv 0$.  
\end{theorem}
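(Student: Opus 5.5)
The plan is to deduce this statement directly from \cite[Theorem 1.1]{CEKPV} by a change of normalization, with no new analysis. That result is formulated with a different Fourier transform, say
\[
\widetilde{\mathscr F}h(\eta)=(2\pi)^{-d/2}\int_{\Rm}e^{-i\langle x,\eta\rangle}h(x)\,dx,
\]
and with the Gaussian weights written as $e^{|x|^2/\beta^2}$ on $h$ and $e^{4|\eta|^2/\alpha^2}$ on $\widetilde{\mathscr F}h$. The task is therefore to translate the hypothesis \eqref{hardyL2}, stated with the normalization \eqref{ft}, into their hypothesis, and to check that the threshold on $\alpha\beta$ becomes $ab\ge\pi^2$.

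\emph{Relating the two transforms.} Comparing \eqref{ft} with $\widetilde{\mathscr F}$ gives
\[
\hat h(\xi)=(2\pi)^{d/2}\,(\widetilde{\mathscr F}h)(2\pi\xi).
\]
Substituting $\eta=2\pi\xi$ in the second norm of \eqref{hardyL2} shows that $e^{b|\xi|^2}\hat h\in L^2$ if and only if $e^{b|\eta|^2/(4\pi^2)}\widetilde{\mathscr F}h\in L^2$. The multiplicative constants $(2\pi)^{d/2}$ and the Jacobian of the dilation are irrelevant, since only finiteness of the norms matters.

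\emph{Matching the parameters.} I would set $1/\beta^2=a$ and $4/\alpha^2=b/(4\pi^2)$. The two finiteness conditions in \eqref{hardyL2} are then exactly the hypotheses of \cite[Theorem 1.1]{CEKPV}. I would then translate their critical inequality on $\alpha\beta$ into a condition on $ab$ and check that it reads $ab\ge\pi^2$.

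\emph{Main obstacle: the bookkeeping of constants.} Conventions differ in several places: whether the weights are $e^{|x|^2/\beta^2}$ or $e^{|x|^2/(2\beta^2)}$, the factor $4$ in the frequency weight, the $(2\pi)^{-d/2}$ versus $2\pi$-in-the-phase conventions, and whether the borderline case is included. A single slip produces a threshold of $4\pi^2$ or $\pi^2/4$ instead of $\pi^2$. To pin this down I would run an independent sanity check with the self-dual Gaussian for \eqref{ft}, namely $h(x)=e^{-\pi|x|^2}$, for which $\hat h(\xi)=e^{-\pi|\xi|^2}$. Here both norms in \eqref{hardyL2} are finite exactly when $a<\pi$ and $b<\pi$. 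Thus $ab$ can be any number below $\pi^2$ with $h\ne0$, but never $\pi^2$.

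The same computation with a general Gaussian $h(x)=e^{-\lambda|x|^2}$, $\lambda>0$, is the decisive check. Then $\hat h(\xi)$ is a constant multiple of $e^{-\pi^2|\xi|^2/\lambda}$, so finiteness requires $a<\lambda$ and $b<\pi^2/\lambda$, hence $ab<\pi^2$. This confirms that the sharp threshold in the normalization \eqref{ft} is $\pi^2$, with the endpoint $ab=\pi^2$ excluded as in the $L^2$ form of Hardy's theorem. The parameter translation in the second step must reproduce exactly this threshold; if it does not, the error lies in the bookkeeping of the first two steps, not in the statement.
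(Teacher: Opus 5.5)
Your proposal is correct and follows the paper's route: Theorem \ref{T:har2} is simply \cite[Theorem 1.1]{CEKPV} rewritten in the normalization \eqref{ft}, and the paper gives no argument beyond that citation. The one step you left to be checked is the explicit hypothesis $\alpha\beta\le 4$ in \cite[Theorem 1.1]{CEKPV}. With your matching $a=\beta^{-2}$, $b=16\pi^2/\alpha^2$ one gets $ab=16\pi^2/(\alpha\beta)^2$, so $\alpha\beta\le4$ is exactly $ab\ge\pi^2$. Your Gaussian computation is consistent with this, but on its own it only shows that the threshold $\pi^2$ cannot be lowered; the conclusion $h\equiv0$ comes from the cited theorem.
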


We are ready to give the 

\begin{proof}[Proof of Proposition \ref{P:main}]
Let $\vf\in L^2(\Rm,d\gamma)$. By the assumption
$ab\sin^2s\geq 1/16$, one has $\sin s\neq0$. Write
\[
s=k\pi+\tau,
\qquad k\in\mathbb Z,
\qquad 0<\tau<\pi.
\]
By the group property and the parity identity \eqref{parity-pre},
\[
(e^{is\mathscr L}\vf)(x)
=(e^{i\tau\mathscr L}\vf)((-1)^k x).
\]
Since the Gaussian weights in \eqref{L2} are invariant under parity,
it therefore suffices to prove the result with $s$ replaced by
$\tau\in(0,\pi)$. With $\psi$ as in \eqref{psi}, consider
\begin{equation}\label{ht}
h_\tau(x)=e^{i\frac{\cot\tau |x|^2}{4}}\psi(x).
\end{equation}
We have
\begin{align}\label{one}
\int_{\Rm}e^{2a|x|^2}|h_\tau(x)|^2dx
&=\int_{\Rm}e^{2a|x|^2}|\psi(x)|^2dx
=\int_{\Rm}e^{2a|x|^2}|\vf(x)|^2e^{-\frac{|x|^2}{2}}dx
\\
&=(2\pi)^{\frac d2}\|e^{a|x|^2}\vf\|^2_{L^2(\Rm,d\gamma)}<\infty,
\notag
\end{align}
by \eqref{L2}. Since $\tau\in(0,\pi)$, Proposition \ref{P:semigroup}
gives
\begin{equation*}
\left|\widehat h_\tau\left(\frac{x}{4\pi\sin\tau}\right)\right|
=(4\pi)^{\frac d2}(\sin\tau)^{\frac d2}
e^{-\frac{|x|^2}{4}}|(e^{i\tau\mathscr L}\vf)(x)|.
\end{equation*}
Hence
\begin{align}\label{beauty}
&\left(\int_{\Rm}e^{2b|x|^2}
\left|\widehat h_\tau\left(\frac{x}{4\pi\sin\tau}\right)\right|^2dx\right)^{1/2}
\\
&=(2\pi)^{\frac d4}(4\pi)^{\frac d2}(\sin\tau)^{\frac d2}
\|e^{b|x|^2}e^{i\tau\mathscr L}\vf\|_{L^2(\Rm,d\gamma)}<\infty.
\notag
\end{align}
After the change of variables $x=4\pi\sin\tau\,y$,
\begin{equation}\label{nik}
\int_{\Rm}e^{2(16b\pi^2\sin^2\tau)|y|^2}
|\widehat h_\tau(y)|^2dy<\infty.
\end{equation}
Applying Theorem \ref{T:har2} to $h_\tau$ and using
$ab\sin^2\tau=ab\sin^2s\geq1/16$, we conclude that
$h_\tau\equiv0$. Hence $\psi\equiv0$ by \eqref{ht}, and therefore
$\vf\equiv0$.
\end{proof}



\vskip 0.2in

\section{Declarations}

\noindent \textbf{Data availability statement:} This manuscript has no associated data.

\vskip 0.2in

\noindent \textbf{Funding and/or Conflicts of interests/Competing interests statement:} The author declares that he does not have any conflict of interest for this work

\bibliographystyle{amsplain}

\end{document}